\documentclass[a4paper,11pt]{article}
\usepackage[top=2.5cm,bottom=2.5cm,left=2.2cm,right=2.2cm]{geometry}
\usepackage[T1]{fontenc}
\usepackage{lmodern}
\usepackage{amsmath,amsfonts,amssymb,amsthm,mathtools}
\usepackage{indentfirst}
\usepackage{enumerate}
\usepackage{microtype,needspace}
\usepackage{cite}
\usepackage{hyperref}
\usepackage{bookmark}
\hypersetup{
  colorlinks=true,linkcolor=blue,citecolor=blue,urlcolor=blue,
  pdftitle={Positive discrepancy of graphs far from Turan graphs},
  pdfauthor={},
  pdfsubject={Positive discrepancy and second eigenvalues of graphs},
  pdfkeywords={positive discrepancy, Turan graph, second eigenvalue, graph stability}
}
\makeatletter
\renewcommand{\@seccntformat}[1]{%
  {\normalsize\csname the#1\endcsname.}\hspace{.5em}}
\makeatother

\numberwithin{equation}{section}
\newtheorem{thm}{Theorem}[section]
\newtheorem{lemma}[thm]{Lemma}
\newtheorem{prop}[thm]{Proposition}
\newtheorem{cor}[thm]{Corollary}

\newtheorem{conj}{Conjecture}
\theoremstyle{definition}

\theoremstyle{remark}

\DeclareMathOperator{\tr}{tr}
\DeclareMathOperator{\rank}{rk}
\DeclareMathOperator{\spanof}{span}
\DeclareMathOperator{\ed}{ed}
\DeclareMathOperator{\disc}{disc}
\DeclareMathOperator{\diag}{diag}
\newcommand{\eps}{\varepsilon}
\newcommand{\R}{\mathbb R}
\newcommand{\normF}[1]{\lVert #1\rVert_F}
\newcommand{\normop}[1]{\lVert #1\rVert_{\mathrm{op}}}
\newcommand{\norm}[1]{\lVert #1\rVert_2}
\newcommand{\ip}[2]{\langle #1,#2\rangle}
\allowdisplaybreaks[1]
\begin{document}
\title{\bfseries Positive discrepancy of graphs far from Tur\'an graphs}
\author{Leyou Xu, 
Bo Zhou\\
\footnotesize South China Normal University, Guangzhou 510631, China}
\date{}
\maketitle
\vspace{-2.5em}

\begin{abstract}
We prove that, for every $\eps>0$, any $n$-vertex graph that needs at
least $\eps n^2$ edge changes to become a Tur\'an graph
has positive discrepancy at least $c_\eps n^{5/4}$. 
Consequently, every such regular graph has second eigenvalue at least $c'_\eps n^{1/4}$. 
These results prove two conjectures of R\"aty, Sudakov and Tomon. 

\smallskip
\noindent\textbf{Keywords:} positive discrepancy, Tur\'an graph, second eigenvalue

\end{abstract}

\section{Introduction}

All graphs in this paper are finite, simple and undirected. Let $G$ be a graph with vertex set 
$V(G)$ and edge set $E(G)$. Let 
$e(G)=|E(G)|$. For $U\subseteq V(G)$,  $G[U]$ denotes the subgraph of $G$ induced by
$U$.
For a graph $G$ with $n$ vertices and  edge density  $p=p(G)=\frac{e(G)}{\binom n2}$,  the {\it positive discrepancy} is defined as 
\[
 \disc^{+}(G)=\max_{U\subseteq V(G)}\disc_G(U), 
\]
where 
\[
\disc_G(U)=e(G[U])-p\binom{|U|}{2}.
\]

For graphs $G,H$ on $n$ vertices, let $\ed(G,H)$ be the minimum number of edge changes (additions and deletions) needed to turn $G$ into a graph that is isomorphic to $H$.  We say that $G$ is \emph{$\eps$-far from} $H$ if $\ed(G,H)\ge\eps n^2$. Otherwise, $G$ is \emph{$\eps$-close to} $H$.
The \emph{Tur\'an\ graph} $T_r(n)$, where $1\le r\le n$, is the complete
$r$-partite graph whose $r$ nonempty parts have sizes differing by at most
one. In particular, $T_1(n)$ is the empty graph on $n$ vertices and $T_n(n)$ is the 
complete graph on $n$ vertices. We say that $G$ is $\eps$-far from every Tur\'an\ graph if $G$ is $\eps$-far from $T_r(n)$ for every integer $r=1,\dots,n$.

\begin{thm}\cite[Theorem 1.2]{RST}
There exists $c > 0$, and for every $\epsilon \in (0,1/2)$ there exists $c_1 > 0$ such that for every sufficiently large $n$, and any graph  $G$ on $n$ vertices of average degree $d$,
\[
\disc^{+}(G) \ge
\begin{cases}
c\, d^{1/2} n & \text{if } d \le n^{2/3}, \\[4pt]
c\, \dfrac{n^{2}}{d} & \text{if } n^{2/3} \le d \le n^{4/5}, \\[8pt]
c_1\, \dfrac{d^{1/4} n}{\log n} & \text{if } n^{4/5} \le d \le \left(\dfrac{1}{2}-\epsilon\right) n.
\end{cases}
\]
\end{thm}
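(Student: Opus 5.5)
Since the statement is quoted from \cite{RST}, I would reprove it with a spectral argument in each density range. Let $A$ be the adjacency matrix of $G$ and $p=d/(n-1)$. Put $M=A-p(J-I)$, so that $\disc_G(U)=\tfrac12 \mathbf 1_U^{T}M\mathbf 1_U$. The basic tool is a rounding lemma. If $M$ has an eigenvector $x$ with eigenvalue $\lambda>0$, then a random set $U$ that contains each vertex $v$ independently with probability $\tfrac12+\tfrac{x_v}{2\|x\|_\infty}$ satisfies $\mathbb E\,\disc(U)\gtrsim \lambda\,\|x\|_2^2/\|x\|_\infty^2$ minus a diagonal error of order $pn$. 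Combining this with a positive/negative splitting of the spectrum, the goal in every case is to show that $M$ has a lot of positive spectral mass.

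The first ingredient comes from the trace identities. We have $\tr M=pn$, and $\tr M^2=\sum_{i\ne j}(A_{ij}-p)^2\approx p(1-p)n^2\approx dn$. Suppose $\disc^+(G)$ is small. Then every test vector of the form $\mathbf 1_U$ gives a small value of the form $\mathbf 1_U^T M\mathbf 1_U$. By the rounding lemma this bounds the positive eigenvalues, and therefore also the positive part of $\tr M$. Since $\tr M\ge 0$, the negative eigenvalues are forced to be few and not too large in total. The negative part of $\tr M^2$ must then carry most of the mass $dn$.

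This is where the regimes split. In the sparse regime $d\le n^{2/3}$, I would argue directly by choosing $U$ as a uniformly random set of size about $n/\sqrt d$ together with a neighbourhood-based set. Equivalently, I take $U=N(v)\cup\{v\}$ for a vertex $v$ of typical degree and average over $v$. This gives $e(G[U])$ of order $\sum_v e(N(v))$. If that is small, I instead use the variance bound $\operatorname{Var}(e(G[U]))\sim d n$ for random $U$ of density $1/2$. A second-moment (Paley--Zygmund) argument then gives a deviation of order $\sqrt{dn}\cdot\sqrt n = d^{1/2}n$, provided the fourth moment is controlled. For the middle regime I would compare $\tr M^3$: a graph with few positive eigenvalues must have very negative $\tr M^3$. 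But $\tr M^3\ge -(\text{stuff})$, and the lower-order count of triangles and paths yields the bound $n^2/d$.

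The dense regime is the main obstacle, and it is also the reason the hypothesis $d\le(\tfrac12-\eps)n$ appears. Complete multipartite graphs have essentially only negative eigenvalues, and near $d\approx n/2$ the graph $K_{n/2,n/2}$ has $\disc^+=O(n)$. Here I would use the fact that the negative eigenvalues of $M$ are bounded below by $-\lambda_{\min}\le$ roughly $\sqrt{\tr M^2}$. If all large negative eigenvalues number $k$, then $\sum \lambda_i^2\approx dn$ together with $\sum\lambda_i\approx 0$ forces either many positive eigenvalues or a low-rank negative structure. A low-rank negative structure makes $G$ close to a complete $k$-partite graph, which has density about $1-1/k$. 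That is incompatible with $p\le\tfrac12-\eps$ unless $k=1$, and $k=1$ means the graph is close to empty, contradicting $d\ge n^{4/5}$. Turning this dichotomy into a quantitative bound requires a dyadic decomposition of the spectrum, which costs the $\log n$ factor, together with the rounding lemma applied to a random combination of $\asymp\sqrt d$ positive eigenvectors. This gives $d^{1/4}n/\log n$. Making the rank-to-multipartite step quantitative is the part I expect to be hardest.
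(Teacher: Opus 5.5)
The paper does not prove this statement; it is quoted from \cite{RST} as background. So your sketch has to stand on its own, and it has a genuine gap at its foundation.

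\textbf{The link from small discrepancy to the spectrum is missing.} The step everything rests on is that small $\disc^+(G)$ controls the positive spectrum of $M$, and your rounding lemma does not deliver it. Since $M$ has zero diagonal, rounding one eigenpair $(\lambda,x)$ gives exactly $\mathbb E\disc_G(U)=\frac{\lambda\ip{\mathbf 1}{x}}{4\|x\|_\infty}+\frac{\lambda\norm{x}^2}{8\|x\|_\infty^2}$. Choosing the sign of $x$ so that $\ip{\mathbf 1}{x}\ge0$, you learn only $\lambda\norm{x}^2\le8\|x\|_\infty^2\disc^+(G)$ for each eigenpair. For a localized eigenvector this is just $\lambda\le8\disc^+(G)$, which is useless because $\lambda_1(M)\le n$ anyway. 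Rounding eigenvectors one at a time can never bound the sum $\sum_{\lambda_i>0}\lambda_i$.

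Also, $\tr M=0$, not $pn$, because $J-I$ has zero diagonal. The trace therefore only gives $\tr M_-=\tr M_+$; it says nothing about how many negative eigenvalues there are.

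What is needed is to round the whole positive part at once by random hyperplanes, as in Lemma~\ref{lem:rounding}. With $X$ the projection onto the positive eigenspace of $M$ and $\beta\le8\disc^+(G)$, one gets $\tr M_+\le n+4\pi\disc^+(G)$. With $X=M_+/\max\{2,\max_i(M_+)_{ii}\}$, one gets $\normF{M_+}^2\le8\pi\max\{2,\max_i(M_+)_{ii}\}\disc^+(G)$.

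Trace information alone cannot suffice. Disjoint copies of $K_{d,d}$ have $\tr M_+\approx n/2$ and $\lambda_1(M)\approx d$. These values are compatible with $\disc^+(G)\le c\sqrt d\,n$, yet in fact $\disc^+(G)\approx nd/8$. So no contradiction can come from bounds on $\tr M_+$ and $\lambda_1(M)$ together with identities valid for every graph. Something beyond the trace has to enter, such as the Frobenius bound combined with control of the diagonal of $M_+$. Your $\tr M^3$ comparison in the middle range is the right kind of argument, but it needs exactly this missing control of $\tr M_+^2$ and $\tr M_+^3$.

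\textbf{The individual regimes also fail concretely.} In the sparse range, write $\mathbf 1_U=\frac12\mathbf 1+\zeta$ with independent $\zeta_v=\pm\frac12$. Then $2\disc_G(U)=\sum_v(d_G(v)-d)\zeta_v+\zeta^\top M\zeta$. For regular $G$ the linear term vanishes: the order-$d^2n$ variance of $e(G[U])$ is cancelled by the fluctuation of $p\binom{|U|}{2}$. Moreover $\operatorname{Var}(\zeta^\top M\zeta)=\frac18\normF{M}^2\asymp dn$. So random sets certify only $\disc^+(G)\gtrsim\sqrt{dn}$, a factor $\sqrt n$ short, and nothing supplies the extra $\sqrt n$ in your $\sqrt{dn}\cdot\sqrt n$. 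Closed neighbourhoods give only $d_G(v)+e(N(v))-p\binom{d_G(v)+1}{2}$, which is about $d$ in a random $d$-regular graph.

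In the dense range, a complete multipartite graph has density about $1-\frac1k$ only if its parts are balanced. For example, $K_{n/4,3n/4}$ has density $\frac38$. It is excluded only because its positive discrepancy is $\Theta(n^2)$, and that balancing step is something you would have to prove. Moreover, the case $k=1$ gives no contradiction. When $d=o(n)$, $G$ already differs from the empty graph in $\frac{dn}2=o(n^2)$ pairs. Closeness to a multipartite graph at any scale $\delta n^2$ therefore carries no information; you would need a structure statement with error well below $dn$. Finally, nothing in the sketch produces the exponent $\frac14$ or the factor $\log n$, so the hardest part is left entirely open.
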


The only known examples of $n$-vertex graphs with positive discrepancy $O(n)$ are the Tur\'an graphs $T_r(n)$, and graphs that are $O(\frac{1}{n})$-close to a Tur\'an graph. This suggests that the positive discrepancy of $G$ might be substantially larger if $G$ is far from being a Tur\'an graph. de Caen~\cite{Ca} constructed a $d$-regular graph on $n$ vertices with $d=\Theta(n)$,, which is $\Omega(1)$-far from every Tur\'an graph and has positive discrepancy $O(n^{5/4})$. Based on these observations, R\"aty, Sudakov and Tomon \cite{RST} proposed the following conjecture.

\begin{conj}\cite[Conjecture~8.1]{RST} \label{c1}
Let $\eps>0>0$. There exists $c>0$ such that if $G$ is an $n$-vertex graph that is $\eps>0$-far from every Tur\'an graph, then
\[
\disc^{+}(G)\ge cn^{5/4}.
\]
\end{conj}

For a graph $G$, we write $A(G)$ for its adjacency matrix and order its real eigenvalues as $\lambda_1(G)\ge\cdots\ge\lambda_n(G)$, counting with multiplicity. We call $\lambda_2(G)$ the second eigenvalue of $G$.

For a $d$-regular graph $G$ on $n$ vertices, it is known that 
\[
\disc^{+}(G) \le \frac{\lambda_2(G)}{2}n + d.
\]
R\"aty, Sudakov and Tomon \cite{RST} proposed another conjecture.

\begin{conj}\cite[Conjecture~8.2]{RST} \label{c2}
Let $\eps>0>0$. There exists $c>0$ such that if $G$ is a regular $n$-vertex graph that is $\eps>0$-far from every Tur\'an graph, then its second largest eigenvalue is at least $cn^{1/4}$.
\end{conj}

Zhang \cite{Zhang}  confirmd  Conjecture \ref{c2} up to an $n^{-\eps}$ factor.
Jin, Milojevi\'c, Tomon and Zhang~\cite[Theorems~1.4 and~1.6]{JMTZ} proved that, for every fixed $0<\gamma<\frac{1}{4}$ and every fixed $\delta>0$, a sufficiently large graph with least eigenvalue at least $-n^\gamma$ can be made a disjoint union of complete graphs by at most $\delta n^2$ edge changes, and a regular graph with $\lambda_2\le n^\gamma$ isclose to a Tur\'an graph. 
These statements with fixed $\gamma<\frac{1}{4}$ do not provide a positive coefficient of $n^{1/4}$ when the number of allowed edge additions and deletions is fixed.

In this paper we resolve both conjectures in the affirmative. 
The results of this paper are the following Theorem \ref{thm:discrepancy} and Corollary \ref{cor:eigenvalue}, confirming Conjectures \ref{c1} and \ref{c2},  respectively.

\begin{thm}\label{thm:discrepancy}
For every $\eps>0$, there is a constant $c>0$ such that for  any $n$-vertex graph $G$ that is $\eps$-far from every Tur\'an graph,  
 $\disc^{+}(G)\ge c n^{5/4}$.
\end{thm}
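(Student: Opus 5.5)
We argue by contradiction: assume $\disc^{+}(G)\le c n^{5/4}$ with $c$ small, and show that $G$ is then $\eps$-close to some $T_r(n)$. We work with the matrix $M=A(G)-p(J-I)$, so that $\disc_G(U)=\tfrac12\mathbf 1_U^{T}M\mathbf 1_U$. The hypothesis says the quadratic form of $M$ is at most $2cn^{5/4}$ on every $0/1$ vector.

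\textbf{Step 1: bound the positive spectrum.} First we reduce to the dense regime. If $d\le \eps n$, then either $G$ is $\eps$-close to the empty graph $T_1(n)$, or $d$ is of order $n$. Similarly, large $p$ is handled near the complete graph, or else via the RST bounds (the cited Theorem 1.2 already gives $\disc^{+}\gg n^{5/4}$ whenever $d\le n^{1-\delta}$). So assume $p\in[\eps',1-\eps']$.

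For a unit vector $x$, take a random set $U$ including each vertex $v$ independently with probability $\tfrac12+\alpha x_v/\|x\|_\infty$. A standard computation gives
\[
\mathbb E\,\mathbf 1_U^{T}M\mathbf 1_U \approx \tfrac14\mathbf 1^{T}M\mathbf 1+\alpha\,\mathbf 1^{T}Mx/\|x\|_\infty+\alpha^2 x^{T}Mx/\|x\|_\infty^2 .
\]
The first term is $O(n)$. We symmetrize using $U$ and its complement to remove the linear term. We expect this to show that every eigenvector $x$ of $M$ with eigenvalue $\mu$ satisfies $\mu\lesssim cn^{5/4}\|x\|_\infty^2$. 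In addition, $\sum_{\mu>0}\mu^2\le \normF{M}^2\le n^2$. Combining these with a delocalization argument, or more robustly by the trace/Frobenius argument that RST use, we aim to show that the positive part $M_+$ has small nuclear-type norm on the relevant test vectors.

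\textbf{Step 2: extract structure from the negative part.} Since $\tr M=0$ and $\normF{M}^2=\Theta(pn^2)$, a small positive spectrum forces the negative eigenvalues to carry almost all of the Frobenius mass. This means $A-pJ$ is close to a negative semidefinite matrix of low effective rank. The aim is the key inequality
\[
\sum_{\mu_i>0}\mu_i\;\lesssim\; c' n^{5/4}\cdot n^{1/4}\cdot\text{(something)},
\]
which would force $\normF{M_+}^2=o(\eps n^2)$ in a quantitatively controlled way. The exponent $5/4$ should come from balancing: a sum of $k$ eigenvalues of size $\mu$ is controlled by $\mu\le cn^{5/4}\cdot(\text{test-vector loss } \sim n^{-1/4}\text{ scaling})$, with the optimum attained, as in de Caen's construction, at $\mu\sim n^{3/4}$ and $k\sim n^{1/2}$.

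\textbf{Step 3: stability.} Once $A\approx pJ+N$ with $N$ negative semidefinite up to error $o(n^2)$ in Frobenius norm, we show that $G$ is close to a complete multipartite graph. The complement $\bar A\approx (1-p)J-N$ is close to a PSD Gram matrix with $0/1$ entries, and a $0/1$ matrix close to PSD is close to a disjoint union of cliques; this is the JMTZ-type fact. So $\bar G$ is close to a union of cliques, and $G$ is close to a complete multipartite graph. Finally, if the part sizes are unbalanced, the part sizes themselves yield a set $U$ (a union of larger parts) with discrepancy $\Omega(n^2)\cdot(\text{imbalance})$. This forces the parts to be nearly equal, so $G$ is $\eps$-close to $T_r(n)$, a contradiction.

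\textbf{Main obstacle.} The quantitative Step 1--2 is the hard part: converting "small quadratic form on $0/1$ vectors" into control of the positive spectrum with the sharp exponent $n^{5/4}$, without losing a $\log n$ or $n^{o(1)}$ factor (which is exactly what Zhang's bound loses). First we would try a Grothendieck-type rounding with careful truncation of the eigenvectors at level $n^{-1/4}$ to handle localized eigenvectors.
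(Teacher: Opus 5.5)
There is a genuine gap, and it is more than a missing estimate: the reduction underlying your Steps~2--3 is false. You compress what small discrepancy tells you into a Frobenius statement: $A-pJ$ is within $o(n^2)$, in squared Frobenius norm, of a negative semidefinite matrix, i.e.\ $A(\overline G)$ is Frobenius-close to PSD. You then invoke ``a $0$-$1$ matrix close to PSD is close to a union of cliques''. De Caen's regular graphs refute this. For them $\lambda_2(G)=O(n^{1/4})$. On $\mathbf 1^\perp$ the eigenvalues of $A(G)-pJ$ are the $\lambda_i(G)$ with $i\ge2$, and on $\mathbf 1$ the eigenvalue is $-d/(n-1)$. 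Hence $\normF{(A(G)-pJ)_+}^2=O(n^{3/2})=o(n^2)$, and equivalently $\lambda_n(\overline G)=-1-\lambda_2(G)\ge-O(n^{1/4})$. Yet $G$ is $\Omega(1)$-far from every Tur\'an graph. So any Frobenius conclusion of order $o(n^2)$, or even $O(n^{3/2})$ with an unspecified constant, cannot feed Step~3.

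What the paper actually needs and proves (Theorem~\ref{thm:structure}) is a \emph{pointwise} bound at the sharp scale with a small constant. The least eigenvalue of $(A(\overline G)+R)[U]$ is at least $-a|U|^{1/4}$, where $a$ is tiny in terms of the target closeness and $R\succeq0$ satisfies $R_{ii}\le2$ and $\normF{R}\le n^{2/3}$. Your ``low effective rank'' in Step~2 is also asserted without argument. It concerns the positive part of $A(\overline G)$, which Step~1 never touches, and it is the core of the paper. Proposition~\ref{prop:lowrank} gets a bounded-rank PSD approximation from the pointwise bound by Hadamard-power compression: the JMTZ quadratic recursion plus a new cubic one, giving $S(t)\le4n(16n/t)^{3/4}$ at the sharp scale. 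A regularity-type partition and the $P_3$ obstruction of Lemma~\ref{lem:obstruction} then finish. That lemma again uses the pointwise bound $\lambda_n(G)\ge-2n^{2/3}$, not a Frobenius one.

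Step~1 has no working mechanism either. Per-eigenvector bounds $\mu\lesssim cn^{5/4}\max_v x_v^2$ cannot be summed because of localized eigenvectors, and you leave the passage to a global statement as an expectation. The paper's device is Lemma~\ref{lem:certificate}. It uses Goemans--Williamson rounding that keeps the quadratic error $\normF{X}^2$ (Lemma~\ref{lem:rounding}), together with minimizing $\sum_iy_i+\frac14\normF{(M-\diag(\mathbf y))_+}^2$ over $\mathbf y\ge\mathbf 0$. This yields $M\preceq Y+R$ with $Y\succeq0$ diagonal, $\sum_iy_i+\frac14\normF{R}^2\le4\pi D$ and $R_{ii}\le2$. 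Localized positive mass is absorbed into $Y$. Since $\sum_vy_v\le4\pi cn^{5/4}$, fewer than $\eta n$ vertices have $y_v>an^{1/4}/4$, and deleting them gives exactly the pointwise bound above. This is where $5/4=1+1/4$ comes from.

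Two smaller points. First, $e(G),e(\overline G)\ge\eps n^2$ follow directly from being $\eps$-far from $T_1(n)$ and $T_n(n)$, so the RST bounds are not needed; they also do not give $n^{5/4}$ throughout $d\le n^{1-\delta}$. Second, in your balancing step the set with positive discrepancy is a union of the \emph{smaller} parts (e.g.\ the complement of a large part), not the larger ones. The paper instead balances via near-regularity, $\sum_v|d_G(v)-d|\le2n\sqrt{2D}$ (Lemma~\ref{lem:degrees}), combined with Lemma~\ref{lem:balance}.
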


\begin{cor}\label{cor:eigenvalue}
For every $\eps>0$, there is a constant $c'>0$ such that for any regular $n$-vertex graph $G$ that is $\eps$-far from every Tur\'an graph, $\lambda_2(G)\ge c' n^{1/4}$.
\end{cor}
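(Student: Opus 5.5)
The plan is to derive Corollary~\ref{cor:eigenvalue} from Theorem~\ref{thm:discrepancy} and the spectral bound $\disc^{+}(G)\le \frac{\lambda_2(G)}{2}n+d$ for $d$-regular graphs stated in the introduction. I will first recall why that bound holds, so the constants are under control. Let $U\subseteq V(G)$ with $|U|=u$, and write $\mathbf 1_U=\frac{u}{n}\mathbf 1+x$ with $x\perp\mathbf 1$. Since $\mathbf 1$ is an eigenvector with eigenvalue $d$, we get
\[
2e(G[U])=\mathbf 1_U^{T}A\mathbf 1_U=\frac{u^2}{n}d+x^{T}Ax\le \frac{u^2 d}{n}+\lambda_2(G)\norm{x}^2 .
\]
Here $\norm{x}^2=u-u^2/n\le n/4$ and $p=d/(n-1)$. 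Then $2\disc_G(U)\le \frac{u^2d}{n}-\frac{d u(u-1)}{n-1}+\lambda_2\frac n4$, and the first difference is at most $d$. This gives $\disc_G(U)\le \frac{\lambda_2 n}{8}+d$ when $\lambda_2\ge 0$, which is even better than the stated bound; I will simply use the stated one.

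Now fix $\eps>0$ and let $c$ be the constant from Theorem~\ref{thm:discrepancy}. For a regular $G$ that is $\eps$-far from every Tur\'an graph we have $d\le n$, so
\[
c n^{5/4}\le \disc^{+}(G)\le \frac{\lambda_2(G)}{2}n+n,
\qquad\text{hence}\qquad
\lambda_2(G)\ge 2c n^{1/4}-2 .
\]
For $n\ge n_0:=\lceil (2/c)^4\rceil$ this is at least $c n^{1/4}$.

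It remains to handle the finitely many $n<n_0$ by showing $\lambda_2(G)>0$ there. Then a minimum over the finitely many graphs of order less than $n_0$ gives a positive constant, and $c'=\min\{c,\min \lambda_2(G)n^{-1/4}\}$ works. To see $\lambda_2(G)>0$, use Smith's classical theorem: a graph with $\lambda_2\le 0$ is a complete multipartite graph plus isolated vertices. A regular such graph is either empty ($T_1(n)$) or complete multipartite with all parts of equal size, hence $T_r(n)$ with $r\mid n$. Being $\eps$-far means $\ed(G,T_r(n))\ge \eps n^2>0$ for all $r$, so $G$ is not a Tur\'an graph, and therefore $\lambda_2(G)>0$.

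The only step needing care is this small-$n$ case; it is exactly what lets a single positive constant $c'$ work for all $n$. Everything else follows directly from Theorem~\ref{thm:discrepancy}.
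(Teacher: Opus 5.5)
Your proof is correct, but it handles the additive error term differently from the paper.

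You use the bound $\disc^{+}(G)\le\frac{\lambda_2(G)}{2}n+d$, so you have to split into two cases. For $n\ge n_0$ the term $d\le n$ is swamped by $cn^{5/4}$. For $n<n_0$ you invoke Smith's theorem to get $\lambda_2(G)>0$ and then take a minimum over finitely many graphs.

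The paper instead makes the bound multiplicative for every $n$. Since $G$ is regular and not a Tur\'an graph, $\overline G$ is not a union of cliques. Lemma~\ref{lem:regular} then gives $\lambda_2(G)\ge\sqrt2-1$; in effect this is interlacing with an induced $P_3$ of $\overline G$, whose least eigenvalue is $-\sqrt2$, combined with $\lambda_n(\overline G)=-1-\lambda_2(G)$. This gap absorbs the term $\frac{d}{n-1}\le 1$ in $\disc^+(G)\le\frac n8\bigl(\lambda_2(G)+\frac{d}{n-1}\bigr)$. The result is $\disc^{+}(G)\le\frac{2+\sqrt2}{8}n\lambda_2(G)$, which yields the explicit constant $c'=(8-4\sqrt2)c$ with no case split.

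Your own computation already gives that sharper discrepancy inequality. Evaluate the first difference exactly as $\frac{du(n-u)}{n(n-1)}\le\frac{dn}{4(n-1)}$ rather than bounding it by $d$.

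Both arguments rest on the same structural fact, that a regular complete multipartite graph is a Tur\'an graph. Yours uses only the qualitative gap $\lambda_2>0$ plus finiteness, at the cost of an inexplicit $c'$. The paper's quantitative gap $\sqrt2-1$ makes the argument uniform in $n$ and the constant explicit.
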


Neither result requires connectivity, and Theorem~\ref{thm:discrepancy} does not require regularity. The regular graphs constructed by de Caen, as discussed in~\cite[Section~8]{RST}, show that the exponents $\frac{5}{4}$ and $\frac{1}{4}$ cannot be improved.

Our spectral argument builds on the compression method of~\cite{Zhang,JMTZ}. In particular, we supplement the quadratic recursion in~\cite[Lemma~6.3]{JMTZ} with a cubic recursion and an explicit control of matrix errors. Following~\cite{GW,BHT}, we use random hyperplane rounding to obtain a matrix inequality from positive discrepancy. A key point is that we retain a squared Frobenius norm error term in this inequality: rather than discarding the error, we carry it through the subsequent spectral argument.

The paper is organized as follows.
In Section~\ref{sec:prelim}, we give  the notation and the standard facts that will  be used in the proofs.
In Section~\ref{sec3}, we give  the structural theorem behind the proof, along with a related bound and a balanced clique partition result. 
In Section~\ref{sec:main} we prove the main results.

\section{Preliminaries}\label{sec:prelim}

Let $G$ be a graph with vertex set $V(G)$ and edge set $E(G)$.
For $v\in V(G)$, we denote by $N_G(v)$ the neighborhood of $v$ in $G$, and by $d_G(v)$ the degree of $v$ in $G$. Let $N_G[v]:=\{v\}\cup N_G(v)$ be the closed neighborhood of $v$.
Let $d(G)$ denote the average degree of $G$, i.e., $d(G)=\frac{2e(G)}{n}$. 
For $S\subseteq V(G)$, $G[S]$ denotes the subgraph induced by $S$. We write $G-S=G[V(G)\setminus S]$ and $e_G(S)=e(G[S])$. 
For disjoint vertex sets $S,T$, let $e_G(S,T)$ denote the number of edges with one
endpoint in each set. When $S,T\ne \emptyset$, the \emph{edge density} of $S$ and $T$ is defined by
\[
 d_G(S,T)=\frac{e_G(S,T)}{|S||T|}.
\]
We omit the subscripts whenever the graph is clear from context.
Let $\overline G$ denote the complement of $G$.

A \emph{union of cliques} is a vertex-disjoint union of complete graphs, with single-vertex components allowed. It is \emph{balanced} if its component orders differ by at most one. The complement of a balanced union of cliques is a Tur\'an\ graph. 

For an \emph{induced $P_3$}, the \emph{middle vertex} is its unique vertex adjacent to the other two, and the \emph{endpoints} are the other two vertices. 
For a graph $G$, let \[
p_3(G)=|\{U\subseteq V(G):G[U]\cong P_3,|U|=3\}|.
\]
A simple fact, which we shall use in this paper, is that a graph is a union of cliques if and only if it has no induced $P_3$, see~\cite{AS}.

The symmetric difference of two sets $S$ and $T$ is $S\mathbin\triangle T$. 
Let $G$ and $H$ be two graphs on vertex set $V$. 
Exactly $|E(G)\mathbin\triangle E(H)|$ changes turn $G$ into $H$. Thus $\ed(G,H)$ is the minimum, over all bijections from $V(G)$ to $V(H)$, of the number of edges on which $G$ and the image of $H$ under that bijection differ. Taking complements of both graphs preserves every symmetric difference, so $\ed(\overline{G},\overline{H})=\ed(G,H)$. 
A sequence of editing procedures costs at most the sum of their individual costs, even if a pair is changed more than once. Every edge addition or deletion changes the degrees
of exactly two vertices by one. In particular, 
\begin{equation}\label{eq:degree-edit}
 \sum_{v\in V}|d_G(v)-d_H(v)|
 \le2|E(G)\mathbin\triangle E(H)|.
\end{equation}

We use the Euclidean inner product $\ip{\mathbf{x}}{\mathbf{y}}=\mathbf{x}^\top \mathbf{y}$ and norm $\norm{\mathbf{x}}$ for vectors.
Let $\mathbf{e}_i$ be the $i$th coordinate vector, $\mathbf 1$ the all ones vector, and $\mathbf{0}$ the zero vector. Let $I$ denote the identity matrix and $J$ the all-one matrix.
For $U\subseteq[n]=\{1,\dots,n\}$, its characteristic vector $\mathbf 1_U$ has coordinate
$1$ on $U$ and $0$ elsewhere. The notation
$\mathbf 1^\perp=\{\mathbf{x}:\ip{\mathbf{x}}{\mathbf 1}=0\}$ denotes the orthogonal complement of $\mathbf 1$. 
For a symmetric matrix $M$, $M[U]$ is its principal submatrix on rows and columns in $U$, and $\diag(\mathbf{x})$ is the diagonal matrix with diagonal $\mathbf{x}=(x_1,\ldots,x_n)$.

Let $M$ be a symmetric matrix. The rank of $M$ is denoted by $\rank M$. 
We write $M\succeq0$ if $M$ is \emph{positive semidefinite}. The \emph{Frobenius norm} and the \emph{operator norm} of $M$ are denoted by $\normF{M}$  and $\normop{M}$, respectively. 
For two symmetric matrices $M$ and $M'$, we write $M\succeq M'$ if $M-M'\succeq0$, and define their inner product by $\ip{M}{M'}=\sum_{i,j}M_{ij}M'_{ij}=\tr(MM')$.
Moreover, if $M,M'\succeq0$, then $\ip{M}{M'}\ge 0$.

Let $\lambda_1(M)\ge \dots \ge\lambda_n(M)$ denote the eigenvalues of an $n\times n$ symmetric matrix $M$, and let  $\mathbf{z}_1,\ldots,\mathbf{z}_n$ be the orthonormal eigenvectors corresponding to $\lambda_1(M), \dots,\lambda_n(M)$, respectively. Then
$M=\sum_{i=1}^n\lambda_i(M)\mathbf{z}_i \mathbf{z}_i^\top$. 
For a scalar $t$, let $t_+=\max\{t,0\}$. The \emph{positive and negative
parts} of a symmetric matrix are $M_+=\sum_{i=1}^n\lambda_i(M)_+\mathbf{z}_i\mathbf{z}_i^\top$ and $M_-=\sum_{i=1}^n(-\lambda_i(M))_+\mathbf{z}_i\mathbf{z}_i^\top$.
It is clear that $M=M_+-M_-$, $M_+,M_-\succeq0$, and $\ip{M_+}{M_-}=0$.
A matrix $M\succeq0$ of rank at most $r$ is the Gram matrix of vectors $\mathbf{z}_1,\ldots,\mathbf{z}_n\in\R^r$, so $M_{ij}=\ip{\mathbf{z}_i}{\mathbf{z}_j}$.
Consequently, $|M_{ij}|\le\sqrt{M_{ii}M_{jj}}$ and $\sum_i\norm{\mathbf{z}_i}^2=\tr M\le r\normop{M}$.

For a subspace $W\subseteq\R^n$, let $P_W$ be its orthogonal projection and denote the $W$-trace of $M$ by $\tr_W M=\tr(P_WM)$. We shall use
$\tr P_W=\dim W$, $\normF{P_W}=\sqrt{\dim W}$ and $\tr_W(\mathbf{z}\mathbf{z}^\top)=\norm{P_W\mathbf{z}}^2$.
The \emph{Hadamard} product of two matrices $M$ and $M'$ is given by $(M\circ M')_{ij}=M_{ij}M'_{ij}$. For vectors, the Hadamard product is defined analogously by $(\mathbf{x}\circ \mathbf{y})_i=x_iy_i$.
The power $M^{\circ q}$ is entrywise, not the ordinary matrix power.

The following result is a consequence of Weyl's inequality~\cite[Theorem 4.3.1]{HJ}.
\begin{lemma}\label{weyl}
For symmetric matrices $M$ and $M'$, $\lambda_n(M-M') \ge \lambda_n(M) - \|M'\|_{\mathrm{op}}$.
\end{lemma}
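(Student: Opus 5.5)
This follows from the Courant--Fischer characterization of the smallest eigenvalue together with Weyl's inequality~\cite[Theorem 4.3.1]{HJ}. For symmetric $n\times n$ matrices $A$ and $B$, Weyl's inequality gives $\lambda_n(A+B)\ge\lambda_n(A)+\lambda_n(B)$. We apply it with $A=M-M'$ and $B=M'$, so that $A+B=M$. This yields
\[
\lambda_n(M)\ge \lambda_n(M-M')+\lambda_n(M'),
\]
which rearranges to $\lambda_n(M-M')\ge \lambda_n(M)-\lambda_n(M')$. Since $M'$ is symmetric, its eigenvalues lie in $[-\normop{M'},\normop{M'}]$. Hence $\lambda_n(M')\le\normop{M'}$, and the claimed inequality follows.

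We can also avoid quoting Weyl and argue directly from the Rayleigh quotient. For every unit vector $\mathbf{x}$,
\[
\mathbf{x}^\top(M-M')\mathbf{x}=\mathbf{x}^\top M\mathbf{x}-\mathbf{x}^\top M'\mathbf{x}\ge \lambda_n(M)-\normop{M'}.
\]
Here we used $\mathbf{x}^\top M\mathbf{x}\ge\lambda_n(M)$ and $|\mathbf{x}^\top M'\mathbf{x}|\le\normop{M'}$, both consequences of the spectral decomposition $M=\sum_i\lambda_i(M)\mathbf{z}_i\mathbf{z}_i^\top$ recalled above. Taking the minimum over unit vectors $\mathbf{x}$ gives $\lambda_n(M-M')$ on the left-hand side, which proves the lemma.

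There is no real obstacle. The only point to check is the sign convention: $\normop{M'}$ bounds $|\lambda_i(M')|$ for every $i$, so it bounds both $\lambda_1(M')$ and $-\lambda_n(M')$. This means the estimate holds regardless of the sign of $M'$.
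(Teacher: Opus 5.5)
Your Rayleigh-quotient argument in the second paragraph is correct and complete. For unit $\mathbf{x}$ you have $\mathbf{x}^\top(M-M')\mathbf{x}\ge\lambda_n(M)-\normop{M'}$, and minimizing over $\mathbf{x}$ gives the lemma. This is exactly the special case of Weyl's inequality that the paper invokes by citation without writing out details, so that paragraph alone proves the statement.

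Your first, Weyl-based paragraph is wrong as written and should be fixed or deleted.

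\begin{itemize}
\item With $A=M-M'$ and $B=M'$, Weyl gives $\lambda_n(M)\ge\lambda_n(M-M')+\lambda_n(M')$. This rearranges to $\lambda_n(M-M')\le\lambda_n(M)-\lambda_n(M')$, which is an \emph{upper} bound on $\lambda_n(M-M')$, not the lower bound you wrote.
\item The inequality you claim, $\lambda_n(M-M')\ge\lambda_n(M)-\lambda_n(M')$, is false in general. For $M=0$ and $M'=\diag(1,-1)$ the left side is $-1$ and the right side is $1$.
\item The correct choice is $A=M$ and $B=-M'$. This gives $\lambda_n(M-M')\ge\lambda_n(M)+\lambda_n(-M')=\lambda_n(M)-\lambda_1(M')\ge\lambda_n(M)-\normop{M'}$.
\end{itemize}

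So the eigenvalue that must be bounded by $\normop{M'}$ is $\lambda_1(M')$, not $\lambda_n(M')$. Your closing remark about signs points at this, but your first argument never uses it.
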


The following is a direct consequence of~\cite[Lemma~3.2]{GW}. 
A quadratic error term of the same type appears in the argument in \cite[Lemma~9.4]{BHT}, although that lemma is concerned with graph energy rather than the discrepancy certificate considered here. 
The result we stated follows by completing the diagonal of $X$ to all ones, applying random-hyperplane rounding, and estimating only the off-diagonal error terms. We include the short argument below.

\begin{lemma}\label{lem:rounding}
Let $M$ be a symmetric $n\times n$ matrix with zero diagonal and
$|M_{ij}|\le1$, and let $\beta=\max_{\sigma\in\{-1,1\}^n}\sigma^\top M\sigma$.
For every $X\succeq0$ with $X_{ii}\le1$ for $i=1,\dots,n$,
$\ip{M}{X}-\normF{X}^2\le\frac\pi2\beta$.
\end{lemma}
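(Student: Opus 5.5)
Write $X=V^\top V$ with unit-or-shorter columns $\mathbf v_1,\dots,\mathbf v_n$; we may assume they lie in $\R^n$. The plan is to complete the diagonal of $X$ to all ones, round with a random hyperplane, and compare the resulting expectation with $\beta$ via the Goemans--Williamson identity.

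\emph{Completing the diagonal.} Append $n$ new orthonormal coordinates and set
\[
\mathbf u_i=\mathbf v_i\oplus\sqrt{1-\norm{\mathbf v_i}^2}\,\mathbf e_i\in\R^{2n}.
\]
Then each $\mathbf u_i$ is a unit vector, and $\ip{\mathbf u_i}{\mathbf u_j}=X_{ij}$ for $i\neq j$. Let $Y$ be the Gram matrix of the $\mathbf u_i$. Then $Y\succeq0$, $Y_{ii}=1$, and $Y$ agrees with $X$ off the diagonal. Because $M$ has zero diagonal, $\ip{M}{X}=\ip{M}{Y}$.

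\emph{Rounding.} Pick a standard Gaussian vector $\mathbf g\in\R^{2n}$ and set $\sigma_i=\operatorname{sign}\ip{\mathbf g}{\mathbf u_i}$. The standard fact (the Grothendieck/Goemans--Williamson identity) is that $\mathbb E[\sigma_i\sigma_j]=\frac2\pi\arcsin Y_{ij}$. Since every outcome $\sigma$ satisfies $\sigma^\top M\sigma\le\beta$, taking expectations gives
\[
\sum_{i\ne j}M_{ij}\arcsin Y_{ij}\le \frac\pi2\beta .
\]

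\emph{Controlling the error.} It remains to compare $\sum_{i\neq j} M_{ij}Y_{ij}$ with $\sum_{i\ne j}M_{ij}\arcsin Y_{ij}$. We have
\[
\sum_{i\ne j}M_{ij}\bigl(Y_{ij}-\arcsin Y_{ij}\bigr)\le\sum_{i\ne j}\bigl|\arcsin Y_{ij}-Y_{ij}\bigr|,
\]
using $|M_{ij}|\le1$. It therefore suffices to show the scalar inequality $|\arcsin t-t|\le t^2$ for $|t|\le1$. Granting it, we get
\[
\ip{M}{X}\le\frac\pi2\beta+\sum_{i\ne j}X_{ij}^2\le\frac\pi2\beta+\normF{X}^2,
\]
which is the claim.

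\emph{The scalar inequality.} This is the only real check. Both sides are odd and even respectively, so it is enough to treat $t\in[0,1]$. There, $\arcsin t-t\ge0$, and we need $\arcsin t\le t+t^2$. At $t=1$ this reads $\pi/2\approx1.571\le2$. The function $f(t)=t+t^2-\arcsin t$ satisfies $f(0)=0$ and $f'(t)=1+2t-(1-t^2)^{-1/2}$. The derivative is positive near $0$, because $(1-t^2)^{-1/2}=1+t^2/2+O(t^4)$. The function $f$ is concave, since $f''(t)=2-t(1-t^2)^{-3/2}$ changes sign only once on $[0,1)$, which makes $f'$ first increase and then decrease. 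So $f$ first increases and then decreases, and its minimum on $[0,1]$ is attained at an endpoint. Since $f(0)=0$ and $f(1)=2-\pi/2>0$, we get $f\ge0$.

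The expectation identity is standard and only needs to be cited, so the main obstacle is this elementary calculus bound. If it needed a weaker constant $C$ in $|\arcsin t-t|\le Ct^2$, that would only change the coefficient of $\normF{X}^2$. Even $|\arcsin t-t|\le(\pi/2-1)|t|^3\le t^2$ suffices, by convexity of $\arcsin t-t$ on $[0,1]$.
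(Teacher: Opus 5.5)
Your proof is correct and follows the paper's argument: your appended-coordinate vectors $\mathbf u_i$ have Gram matrix exactly the paper's $\widehat X=X+\diag(1-X_{11},\dots,1-X_{nn})$, and the rest is the same hyperplane rounding plus the bound $|\arcsin u-u|\le u^2$, which the paper asserts and you verify. Two small slips in that verification: $f$ is convex-then-concave rather than concave (your unimodality argument is what actually works), and in the closing aside convexity of $\arcsin t-t$ only gives the chord bound $(\frac\pi2-1)t$, so the cubic bound should instead be derived from the nonnegative Taylor coefficients of $\arcsin$, though you do not need it.
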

\begin{proof}
Let $\widehat{X}=X+\operatorname{diag}(1-X_{11},\dots,1-X_{nn})$. Then $\widehat{X}\succeq 0$ and $\widehat{X}_{ii}=1$ for every $i$. Hence there exists a random sign vector $\sigma\in\{-1,1\}^n$ such that
$\mathbb{E}(\sigma_i\sigma_j)=\frac{2}{\pi}\arcsin X_{ij}$ for $i\ne j$.
Since $M$ has zero diagonal, $|M_{ij}|\le 1$ and $|\arcsin u-u|\le u^2$ for $|u|\le 1$, we have
\begin{align*}
\beta&\ge\mathbb{E}(\sigma^\top M\sigma)=\frac{2}{\pi}\sum_{i\ne j}M_{ij}\arcsin X_{ij}\\
&\ge\frac{2}{\pi}\left(\langle M,X\rangle-\sum_{i\ne j}X_{ij}^2\right)\ge\frac{2}{\pi}\left(\langle M,X\rangle-\|X\|_F^2\right).
\end{align*}
This proves the claimed inequality.
\end{proof}

For the following lemma, the first part can be found in~\cite{Higham}, and the second part follows from the projection and~\cite[Sections~2.3, 3.1 and~6.3.3]{PB}.

\begin{lemma}\label{lem:positivepart}
For every symmetric matrix $M$, let $g(M)=\frac14\normF{M_+}^2$. Then 
$g(M)=\max_{X\succeq0}\bigl(\ip{M}{X}-\normF{X}^2\bigr)$,
and the maximizer is uniquely $X=\frac{1}{2}M_+$.
The function $g$ is continuously differentiable, with
$\nabla g(M)=\frac{1}{2}M_+$. In particular, for every symmetric matrix $E$, the directional derivative of $g$ at $M$ in direction $E$ is $\frac{1}{2}\langle M_+,E\rangle$, even when $M$ has zero eigenvalues.
\end{lemma}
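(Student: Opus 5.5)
The statement to prove is Lemma~\ref{lem:positivepart}. I would first establish the variational formula and then derive differentiability from it.

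\emph{The variational formula.} Fix $X\succeq0$. Write $M=M_+-M_-$ and note that $\ip{M_-}{X}\ge0$ because $M_-,X\succeq0$. Hence
\[
\ip{M}{X}-\normF{X}^2\le\ip{M_+}{X}-\normF{X}^2 .
\]
Completing the square gives
\[
\ip{M_+}{X}-\normF{X}^2=\tfrac14\normF{M_+}^2-\normF{X-\tfrac12M_+}^2\le\tfrac14\normF{M_+}^2 .
\]
So the maximum is at most $g(M)$. Now take $X=\frac12M_+$. This matrix is positive semidefinite, and $\ip{M_-}{M_+}=0$. Substituting gives
\[
\ip{M}{X}-\normF{X}^2=\tfrac12\normF{M_+}^2-\tfrac14\normF{M_+}^2=g(M),
\]
so the maximum is attained.

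\emph{Uniqueness of the maximizer.} Suppose $X\succeq0$ also attains the maximum. Then both inequalities above are equalities. Equality in the completed square forces $X=\frac12M_+$. Alternatively, the objective is strictly concave (because of the $-\normF{X}^2$ term) over the convex cone of positive semidefinite matrices, so the maximizer is unique.

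\emph{Differentiability.} This is the main step. The argument is a Danskin-type envelope bound, using the maximizer $X_M=\frac12M_+$.
\begin{itemize}
\item Lower bound: $X_M$ is a feasible point for $M+E$. So
\[
g(M+E)\ge\ip{M+E}{X_M}-\normF{X_M}^2=g(M)+\tfrac12\ip{M_+}{E}.
\]
\item Upper bound: apply the same argument with the roles of $M$ and $M+E$ exchanged. This gives
\[
g(M+E)\le g(M)+\tfrac12\ip{(M+E)_+}{E}.
\]
\end{itemize}
Combining the two bounds,
\[
0\le g(M+E)-g(M)-\tfrac12\ip{M_+}{E}\le\tfrac12\ip{(M+E)_+-M_+}{E}.
\]

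\emph{Continuity of $M\mapsto M_+$.} The right-hand side above is $o(\normF{E})$ once we know that $M\mapsto M_+$ is continuous; in fact it is $1$-Lipschitz in the Frobenius norm. The reason is that $M_+$ is the Frobenius-nearest positive semidefinite matrix to $M$ (Higham's result). Equivalently, it is the metric projection onto the closed convex cone of positive semidefinite matrices, and projections onto closed convex sets are nonexpansive.

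To justify the projection claim directly, check the variational inequality
\[
\ip{M-M_+}{Y-M_+}\le0\quad\text{for all }Y\succeq0 .
\]
Since $M-M_+=-M_-$, this reads
\[
-\ip{M_-}{Y}+\ip{M_-}{M_+}=-\ip{M_-}{Y}\le0,
\]
which holds because $M_-,Y\succeq0$.

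\emph{Conclusion.} We get $g(M+E)-g(M)-\frac12\ip{M_+}{E}\le\frac12\normF{E}^2$. Hence $g$ is Fréchet differentiable with $\nabla g(M)=\frac12M_+$. The gradient is continuous by the Lipschitz property, so $g$ is continuously differentiable. The directional derivative formula then follows immediately. None of this uses distinct or nonzero eigenvalues, so it holds even when $M$ has zero eigenvalues.
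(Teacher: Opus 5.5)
Your proof is correct and follows the route the paper indicates, since the paper gives no argument of its own and only cites Higham for the variational formula and Parikh--Boyd for differentiability (the Moreau-envelope gradient formula via the prox, nonexpansiveness of the projection, and the projection onto the PSD cone being $M_+$). You have made those cited ingredients self-contained: completing the square gives the maximum and its uniqueness, the two-sided envelope bound gives the derivative, and the variational inequality $\ip{M-M_+}{Y-M_+}\le 0$ identifies $M_+$ as the projection and yields the $1$-Lipschitz bound.
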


The following standard spectral results will be used in the regular case. 
The first bound for the second eigenvalue follows from \cite[Theorem 3.5.1]{BH}, and the identity for the complement is from \cite[Section~1.3.2]{BH}. The second bound can be found in~\cite{Pet}.
The discrepancy estimate follows from~\cite[Exercise~3.3(i)]{BH}.

\begin{lemma}\label{lem:regular}
Let $G$ be a noncomplete $d$-regular graph of order $n\ge2$.
Then $\lambda_2(G)\ge0$,
$\lambda_n(\overline G)=-1-\lambda_2(G)$, and
$\disc^+(G)\le\frac n8\left(\lambda_2(G)+\frac{d}{n-1}\right)$.
If $\overline G$ is not a union of cliques, then
$\lambda_2(G)\ge\sqrt2-1$ and $\disc^+(G)\le\frac{2+\sqrt2}{8}\,n\lambda_2(G)$.
\end{lemma}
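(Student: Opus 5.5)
The plan is to get all the spectral statements from Cauchy interlacing, applied to a small induced subgraph, together with the fact that $A(G)$ and $A(\overline G)=J-I-A(G)$ share the eigenvector $\mathbf 1$. The discrepancy bounds will come from splitting $\mathbf 1_U$ into its component along $\mathbf 1$ and its component in $\mathbf 1^\perp$.

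\emph{Step 1: $\lambda_2(G)\ge 0$.} If $G$ is disconnected, every component is $d$-regular, so $d$ has multiplicity at least two and $\lambda_2(G)=d\ge 0$. If $G$ is connected and noncomplete, take two vertices at distance $2$ together with a common neighbour. These three vertices induce a $P_3$, whose eigenvalues are $\sqrt2,0,-\sqrt2$. Interlacing then gives $\lambda_2(G)\ge\lambda_2(P_3)=0$.

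\emph{Step 2: the identity $\lambda_n(\overline G)=-1-\lambda_2(G)$.} Since $G$ is regular, $\mathbf 1$ is an eigenvector of $A(G)$, so we may take the remaining orthonormal eigenvectors in $\mathbf 1^\perp$. On $\mathbf 1^\perp$ we have $A(\overline G)\mathbf x=(-1-\lambda)\mathbf x$ whenever $A(G)\mathbf x=\lambda\mathbf x$, while $A(\overline G)\mathbf 1=(n-1-d)\mathbf 1$. Hence the spectrum of $\overline G$ is $n-1-d$ together with $-1-\lambda_i(G)$ for $i\ge2$. The smallest of these is $-1-\lambda_2(G)$, because $n-1-d\ge 0> -1\ge -1-\lambda_2(G)$ by Step 1.

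\emph{Step 3: the first discrepancy bound.} Let $|U|=k$ and write $\mathbf 1_U=\frac kn\mathbf 1+\mathbf y$ with $\mathbf y\perp\mathbf 1$, so that $\norm{\mathbf y}^2=k(n-k)/n$. Then
\[
e(G[U])=\tfrac12\mathbf 1_U^\top A\mathbf 1_U\le \frac{k^2d}{2n}+\frac{\lambda_2(G)}{2}\cdot\frac{k(n-k)}{n}.
\]
Subtracting $p\binom k2=\frac{d}{n-1}\cdot\frac{k(k-1)}{2}$ and using the identity
\[
\frac{k^2}{n}-\frac{k(k-1)}{n-1}=\frac{k(n-k)}{n(n-1)},
\]
we get
\[
\disc_G(U)\le\frac{k(n-k)}{2n}\Bigl(\lambda_2(G)+\frac d{n-1}\Bigr).
\]
The bracket is nonnegative by Step 1, so $k(n-k)\le n^2/4$ gives the stated bound $\frac n8\bigl(\lambda_2(G)+\frac d{n-1}\bigr)$.

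\emph{Step 4: the case where $\overline G$ is not a union of cliques.} By the characterization of unions of cliques, $\overline G$ then contains an induced $P_3$. Interlacing gives $\lambda_n(\overline G)\le\lambda_3(P_3)=-\sqrt2$. Step 2 then yields $\lambda_2(G)\ge\sqrt2-1$. Since $d/(n-1)\le1\le(\sqrt2+1)\lambda_2(G)$, we have $\lambda_2(G)+\frac d{n-1}\le(2+\sqrt2)\lambda_2(G)$, and inserting this into Step 3 gives the second discrepancy bound.

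I expect no real obstacle; the only points needing care are the following. First, the disconnected case in Step 1. Second, checking in Step 2 that the eigenvalue $n-1-d$ of $\overline G$ cannot be the minimum, which is exactly where Step 1 is used. Third, noting that the $P_3$ in Step 1 is induced, which is what makes interlacing applicable.
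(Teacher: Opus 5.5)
Your proof is correct. The paper gives no argument of its own here. It cites Brouwer--Haemers for $\lambda_2(G)\ge0$, for the spectrum of $\overline G$ and for the discrepancy estimate, and Petrovi\'c for $\lambda_2(G)\ge\sqrt2-1$. You instead derive everything from Cauchy interlacing and the splitting $\mathbf 1_U=\frac{|U|}{n}\mathbf 1+\mathbf y$ with $\mathbf y\perp\mathbf 1$.

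The main gain is in Step~4. There you avoid Petrovi\'c's result on graphs with small second eigenvalue entirely: once $G$ is regular, the complement identity plus interlacing on an induced $P_3$ of $\overline G$ is all that is needed. Step~3 also yields the size-dependent bound $\disc_G(U)\le\frac{|U|(n-|U|)}{2n}\bigl(\lambda_2(G)+\frac d{n-1}\bigr)$, which is slightly stronger than what is stated.

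Two small remarks:
\begin{enumerate}
\item Step~1 needs no case split. A noncomplete graph has two nonadjacent vertices, whose principal submatrix is the $2\times2$ zero matrix, so interlacing gives $\lambda_2(G)\ge0$ at once.
\item In Steps~2 and~3 you tacitly use $\lambda_1(G)=d$. This is what identifies the eigenvalues of $A(G)$ on $\mathbf 1^\perp$ with $\lambda_2(G),\dots,\lambda_n(G)$. It is standard, but it should be stated.
\end{enumerate}
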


%

\section{Turning graphs into unions of cliques}\label{sec3}

In this section, we establish a structural theorem showing that a graph can be made a union of cliques by few edge additions and deletions, assuming a suitable spectral condition. 
We also obtain a related upper bound on the minimum number of edge additions and deletions, and a result on balanced clique partitions.

\subsection{A structural theorem via spectral conditions}

Let $\delta\in \R$ with $0<\delta<1/2$. 
A pair of disjoint nonempty vertex sets is \emph{$\delta$-empty} if its edge density is at most $\delta$, and \emph{$\delta$-full} if its density is at least $1-\delta$.
Every other pair is called \emph{$\delta$-exceptional}.

\begin{lemma}\label{lem:obstruction}
Let $0<\delta<1/16$ and $s\ge0$. Suppose a graph $G$ satisfies $\lambda_n(G)\ge-s$. There cannot be three disjoint vertex sets $X,Y,Z$ with $|X|=|Y|=|Z|>6s$, such that $(X,Y)$ and $(Y,Z)$ are $\delta$-full and $(X,Z)$ is $\delta$-empty.
\end{lemma}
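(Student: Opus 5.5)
Suppose such $X,Y,Z$ exist with common size $m>6s$. We use a test vector and the Rayleigh quotient. Since $\lambda_n(G)\ge -s$, every $\mathbf{x}$ satisfies $\mathbf{x}^\top A\mathbf{x}\ge -s\norm{\mathbf{x}}^2$. The natural choice, modeled on an induced $P_3$ with middle vertex $Y$, is
\[
\mathbf{x}=\mathbf 1_X-\mathbf 1_Y+\mathbf 1_Z .
\]
Its norm is $\norm{\mathbf{x}}^2=3m$. Expanding gives
\[
\mathbf{x}^\top A\mathbf{x}=2e(X)+2e(Y)+2e(Z)-2e(X,Y)-2e(Y,Z)+2e(X,Z).
\]

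The estimates come directly from the density hypotheses:
\begin{itemize}
\item $\delta$-fullness gives $e(X,Y)\ge(1-\delta)m^2$ and $e(Y,Z)\ge(1-\delta)m^2$.
\item $\delta$-emptiness gives $e(X,Z)\le\delta m^2$.
\item The internal edge counts $e(X),e(Y),e(Z)$ are uncontrolled. They are positive and could be as large as $\binom m2$ each, contributing up to about $3m^2$.
\end{itemize}
That last contribution swamps the $-4(1-\delta)m^2$ from the cross terms, so the naive vector fails. The internal edges are the main obstacle.

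The first fix is to weight the parts, taking $\mathbf{x}=a\mathbf 1_X-b\mathbf 1_Y+a\mathbf 1_Z$. Internal edges then contribute at most $(2a^2+b^2)m^2$ (twice the edges, less the trace). The cross terms contribute at most $-4ab(1-\delta)m^2+2a^2\delta m^2$. Minimizing over the ratio cannot push the sum below zero, because a clique on $X\cup Y\cup Z$ minus the $X$–$Z$ edges is essentially a union of two cliques. Such a graph genuinely has $\lambda_n\approx -cm$, but only through the empty pair, and the complete-internal case must be used against us correctly.

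The better route is to exploit that the hypothesis already allows sets as large as we like. We pass to a \emph{random} or \emph{greedy} choice that also controls the insides. Concretely, we use the vector with $\mathbf{x}$ supported on $X\cup Z$ with sign $+$ and $Y$ with sign $-$, but restricted to matched pairs. Split via averaging: pick a random bijection, or simply observe that $\mathbf{x}^\top A\mathbf{x}=\sum$ over pairs. Alternatively, we handle internal edges via the spectral bound on each part: the principal submatrix $A[X\cup Y\cup Z]$ also has $\lambda_{\min}\ge -s$ by interlacing.

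Consider the $3\times3$ quotient-type matrix $B$ of average densities, with $B_{ij}=\mathbf 1_{P_i}^\top A\mathbf 1_{P_j}/m$. By the test-vector inequality, $B\succeq -sI$ on all combinations $\sum c_i\mathbf 1_{P_i}/\sqrt m$, since the vectors $\mathbf 1_{P_i}/\sqrt m$ are orthonormal. The diagonal entries satisfy $B_{ii}\in[0,m-1]$, while $B_{XY},B_{YZ}\ge(1-\delta)m$ and $B_{XZ}\le\delta m$. Then the $2\times2$ minor on $\{X,Z\}$ is not the obstruction. Instead, take the vector $(1,-t,1)$ together with $(1,0,-1)$-type combinations: the minor on $(X,Z)$ gives $B_{XX}+B_{ZZ}-2B_{XZ}\ge -2s$, which is not useful. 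So use the determinant: $B+sI\succeq0$ with off-diagonals near $m$ forces $B_{XX},B_{YY},B_{ZZ}$ near $m$. By Cauchy–Schwarz for Gram matrices, $(B_{XY}+\ldots)^2\le(B_{XX}+s)(B_{YY}+s)$, so the diagonal entries are all at least $(1-\delta)m-s$. Then the Gram vectors $u_X,u_Y,u_Z$ have norms at most $\sqrt{m+s}$ with $\langle u_X,u_Y\rangle,\langle u_Y,u_Z\rangle\ge(1-\delta)m$. Hence $\norm{u_X-u_Y}^2\le 2(m+s)-2(1-\delta)m=2\delta m+2s$, and likewise for $u_Y,u_Z$, so $\norm{u_X-u_Z}^2\le 4(2\delta m+2s)$. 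This yields
\[
\langle u_X,u_Z\rangle\ge \tfrac12\bigl(2(1-\delta)m-2s\bigr)-\tfrac12\cdot 8(\delta m+s)\ge (1-5\delta)m-5s .
\]
With $\delta<1/16$ and $s<m/6$ the right side exceeds $\delta m$, contradicting $B_{XZ}\le\delta m$. The constants will need checking, and this final Gram-matrix triangle-inequality step is where the thresholds $1/16$ and $6s$ must be verified.
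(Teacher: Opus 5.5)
There is a genuine gap, and along the way you discard the argument that actually works. Your claim that the weighted vector $a\mathbf 1_X-b\mathbf 1_Y+a\mathbf 1_Z$ ``cannot push the sum below zero'' is false. With $(a,b)=(1,2)$, your own bounds give $\mathbf x^\top A\mathbf x\le\bigl(6-8(1-\delta)+2\delta\bigr)m^2=(-2+10\delta)m^2<-m^2$, while $\|\mathbf x\|^2=6m$. So the Rayleigh quotient is below $-m/6<-s$. This is exactly the paper's proof. On your own extremal example (the complete graph on $X\cup Y\cup Z$ minus the $X$--$Z$ edges), this vector gives Rayleigh quotient $-m/3-1$, so that example supports the approach rather than defeating it.

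The Gram-matrix route is set up correctly: $B+sI\succeq0$ follows by compressing $A$ onto the orthonormal vectors $\mathbf 1_P/\sqrt m$, and $\|u_P\|^2=B_{PP}+s\le m-1+s$. Its last step, however, does not close.

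\begin{enumerate}[(i)]
\item Cauchy--Schwarz gives only $\|u_X\|^2\ge(1-\delta)^2m^2/(m-1+s)$. This does not yield the bound $\|u_X\|^2\ge(1-\delta)m-s$ used in your final line; for $s=0$ it would need $\delta m\le1$.
\item Even granting that bound, the inequality $(1-5\delta)m-5s>\delta m$ is false in the allowed range. For $\delta=1/20$ and $m=7s$, the left side is $m/28<m/20$.
\end{enumerate}

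The loss comes from bounding $\|u_X\|^2,\|u_Z\|^2$ from above inside $\|u_X-u_Y\|^2,\|u_Y-u_Z\|^2$, but from below in the polarization formula. Combining the terms exactly avoids this:
\[
\langle u_X,u_Z\rangle=2\langle u_X,u_Y\rangle+2\langle u_Y,u_Z\rangle-\tfrac12\|u_X\|^2-2\|u_Y\|^2-\tfrac12\|u_Z\|^2+\tfrac12\|u_X-2u_Y+u_Z\|^2 .
\]
This identity needs only the upper bounds on the norms. It gives $\delta m\ge B_{XZ}\ge(1-4\delta)m-3s$, i.e.\ $s\ge(1-5\delta)m/3>11m/48>m/6$, the desired contradiction. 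Note that this is nothing other than $c^\top(B+sI)c\ge0$ for $c=(1,-2,1)$, i.e.\ the paper's test vector in disguise.
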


\begin{proof}
Let $t=|X|=|Y|=|Z|$.
Suppose these sets exist. 
We have $e(X,Y)\ge (1-\delta)t^2$, $e(Y,Z)\ge (1-\delta)t^2$ and $e(X,Z)\le \delta t^2$. 
Let $\mathbf{x}$ be a vector with \[
x_u=\begin{cases}
1&\mbox{ if } u\in X,\\
-2&\mbox{ if }u\in Y,\\
1&\mbox{ if }u\in Z,\\
0&\mbox{ otherwise.}
\end{cases}
\] 
Then, as for $W\in \{X,Y,Z\}$, $e(W)\le {t\choose 2}\le \frac{1}{2} t^2$, we have
\begin{align*}
\mathbf{x}^\top A(G)\mathbf{x}&=2e(X)+8e(Y)+2e(Z)-4e(X,Y)-4e(Y,Z)+2e(X,Z)\\
&\le t^2+4t^2+t^2-4(1-\delta)t^2-4(1-\delta)t^2+2\delta t^2 \\
&=(-2+10\delta)t^2\\
&<-t^2.
\end{align*}
Note that $\mathbf{x}^\top \mathbf{x}=6t$.
So by Rayleigh's principle, \[
\lambda_n\le \frac{\mathbf{x}^\top A(G)\mathbf{x}}{\mathbf{x}^\top \mathbf{x}}<\frac{-t^2}{6t}=-\frac{t}{6}<-s,
\]
a contradiction. 
\end{proof}

For a graph $G$, let $\ed_c(G)$ be the minimum number of edge additions and deletions needed to turn $G$ into a union of cliques. 
Alon and Shapira~\cite{AS} proved a polynomial bound on $\operatorname{ed}_c(G)$ in terms of $p_3(G)$, see also~\cite[Lemma~8.4]{JMTZ}.
We give a direct proof that yields a bound in terms of $\sqrt{p_3(G)}$.

\begin{lemma}\label{lem:editing}
For a graph $G$ of order $n$, $\ed_c(G)\le 3\sqrt{np_3(G)}$.
\end{lemma}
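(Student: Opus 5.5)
The plan is a greedy ``peel off a closed neighbourhood'' procedure with a threshold $t>0$, chosen at the end to balance two kinds of cost. Throughout, $H$ denotes the subgraph of $G$ induced by the vertices not yet processed. Since $H$ is an induced subgraph of $G$, every induced $P_3$ of $H$ is an induced $P_3$ of $G$, so $p_3(H)\le p_3(G)$ at every stage.

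For $v\in V(H)$, define $\operatorname{cost}_H(v)$ as the number of non-adjacent pairs inside $N_H(v)$ plus the number of edges of $H$ from $N_H(v)$ to $V(H)\setminus N_H[v]$. This is exactly the number of edits needed to turn $N_H[v]$ into a clique and disconnect it from the rest of $H$.
\begin{itemize}
\item A non-edge $\{a,c\}\subseteq N_H(v)$ is an induced $P_3$ with middle vertex $v$.
\item An edge $uw$ with $u\in N_H(v)$ and $w\notin N_H[v]$ is an induced $P_3$ $v\,u\,w$ with endpoint $v$.
\end{itemize}
So each unit of $\operatorname{cost}_H(v)$ corresponds to a distinct induced $P_3$ of $H$ containing $v$. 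Conversely, an induced $P_3$ $a\,b\,c$ is counted at most once for each of $v=a,b,c$. Hence
\[
\sum_{v\in V(H)}\operatorname{cost}_H(v)\le 3p_3(H)\le 3p_3(G).
\]

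The procedure runs as follows. While some $v\in V(H)$ satisfies $\operatorname{cost}_H(v)\le t\,|N_H[v]|$, we make $N_H[v]$ a clique, delete its edges to the rest of $H$, and remove $N_H[v]$ from $H$. We charge the cost to the removed vertices, at most $t$ per vertex. Each vertex is removed at most once, so the total cost of this phase is at most $tn$.

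If the procedure stops with a nonempty $H$ on $k$ vertices, every $v$ satisfies $\operatorname{cost}_H(v)>t(d_H(v)+1)$. Summing over $v$ gives
\[
3p_3(G)\ \ge\ t\,(2e(H)+k)\ \ge\ 2t\,e(H).
\]
We then delete all edges of $H$, turning its vertices into singleton cliques, at cost $e(H)\le 3p_3(G)/(2t)$.

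The removed blocks are cliques with no edges between them or to the rest of the graph, and the final remainder is edgeless. So the result is a union of cliques, with total cost at most $tn+\frac{3p_3(G)}{2t}$. If $p_3(G)=0$, then $G$ is already a union of cliques and there is nothing to prove. Otherwise we choose $t=\sqrt{3p_3(G)/(2n)}$, which gives
\[
\ed_c(G)\le 2\sqrt{\tfrac32\, n\,p_3(G)}=\sqrt6\,\sqrt{np_3(G)}\le 3\sqrt{np_3(G)}.
\]

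The main point needing care is the counting step: each induced $P_3$ of $H$ should be charged at most three times in total across all $v$. In particular, for a fixed $v$, no $P_3$ should be counted twice within $\operatorname{cost}_H(v)$ (once as a non-edge and once as an outgoing edge). This holds because the two types of pairs are disjoint: a non-edge lies inside $N_H(v)$, while an outgoing edge has one endpoint outside $N_H[v]$.
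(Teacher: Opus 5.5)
Your proof is correct and is essentially the paper's argument: both repeatedly peel off a closed neighbourhood $N_H[v]$ whose editing cost (exactly the number of induced $P_3$'s of $H$ through $v$) is small relative to $d_H(v)+1$, using $\sum_v \text{cost}_H(v)\le 3p_3(G)$ against $\sum_v (d_H(v)+1)$, and then delete the leftover edges. The only difference is bookkeeping: the paper stops once $e(H)\le bn^2$ with $b=\sqrt{p_3(G)/n^3}$ and uses averaging at each step to find a cheap vertex, whereas you fix the ratio threshold $t$ and apply the same averaging at the end to bound $e(H)$, which gives the slightly better constant $\sqrt6$ instead of $5/2$.
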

\begin{proof}
If $p_3(G)=0$, then $G$ is already a union of cliques, so $\ed_c(G)=0\le 3\sqrt{np_3(G)}$. 
Suppose that $p_3(G)\ge 1$ and let $b=\sqrt{\frac{p_3(G)}{n^3}}$. 
Let $G_1$ be the subgraph of $G$ induced by the vertices not belonging to any isolated clique.
Then $p_3(G_1)= p_3(G)$. 
For $v\in V(G_1)$, let $p_{3}^{(1)}(v)$ denote the number of sets inducing a $P_3$ in $G_1$ and containing $v$. 

If $e(G_1) \le bn^2$, then deleting all edges of $G_1$ yields a union
of cliques, and the number of edges deleted is at most $bn^2 < 3\sqrt{np_3(G)}$, as desired.

Otherwise, we proceed iteratively. Suppose at some step we have a graph
$G_i$ with $e(G_i) > bn^2$. For $v\in V(G_i)$, let $p_{3}^{(i)}(v)$ denote the number of sets inducing a $P_3$ in $G_i$ and containing $v$. 
Then $\sum_{v \in V(G_i)} p_3^{(i)}(v) = 3p_3(G_i) \le 3p_3(G)$
and $\sum_{v \in V(G_i)} \bigl(d_{G_i}(v)+1\bigr) = 2e(G_i) + |V(G_i)| > 2bn^2$.
Averaging the quantities $\frac{p_3^{(i)}(v)}{d_{G_i}(v)+1}$ over all vertices $v$, we obtain $\frac{\sum_{v \in V(G_i)} p_3^{(i)}(v)}
     {\sum_{v \in V(G_i)} \bigl(d_{G_i}(v)+1\bigr)}$.
Hence there exists a vertex $u_i \in V(G_i)$ such that
\[
\frac{p_3^{(i)}(u_i)}{d_{G_i}(u_i)+1}
\le \frac{\sum_{v \in V(G_i)} p_3^{(i)}(v)}
        {\sum_{v \in V(G_i)} \bigl(d_{G_i}(v)+1\bigr)}
\le \frac{3p_3(G)}{2bn^2} = \frac{3}{2}bn.
\]
Let $C_i = N_{G_i}[u_i]$. We add every missing edge inside $C_i$ and delete every edge between $C_i$ and $V(G_i)\setminus C_i$. The total number of edge additions and deletions is exactly $p_3^{(i)}(u_i)$. We then let $G_{i+1} = G_i - C_i$. One has $p_3(G_{i+1}) \le p_3(G_i)\le p_3(G)$.

This process terminates at some graph $G_t$ with $e(G_t) \le bn^2$.
Deleting all edges of $G_t$ yields a union of cliques. Thus, the total number of edges deleted or added is at most \[
\sum_{i=1}^{t-1} p_3^{(i)}(u_i) + e(G_t)
\le \frac{3}{2}bn \sum_{i=1}^{t-1} \bigl(d_{G_i}(u_i)+1\bigr) + bn^2
\le \frac{5}{2}bn^2\le 3\sqrt{np_3(G)}.
\]
So $\ed_c(G)\le 3\sqrt{np_3(G)}$.
\end{proof}

We next record an auxiliary proposition, which will be used in the proof of the main result in this subsection. For readability, the proof of the following proposition is deferred to Appendix~\ref{app:spectrum}.  An explicit sufficient choice of $N^*(\tau)$ is also given there.


\begin{prop}\label{prop:lowrank}
Let $\tau\in(0,1)$, $a=\frac{\tau}{1000}$, and $\kappa=(\frac{\tau}{1024})^4$. There exists an integer $N^*(\tau)$ 
such that if $n\ge N^*(\tau)$, $A$ is a symmetric $0$-$1$ matrix with zero
diagonal, $R\succeq0$ satisfies $R_{ii}\le2$ for $i=1,\dots,n$ and $\|R\|_F\le n^{2/3}$, and $C=A+R$ satisfies $\lambda_n(C)\ge-a n^{1/4}$, then there is a matrix $B\succeq0$ with
$\rank B\le4\kappa^{-2}$, $\|B\|_{\mathrm{op}}\le2n$ and $\|C-B\|_F^2\le\tau n^2$.
\end{prop}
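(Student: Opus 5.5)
The plan is spectral truncation. Write $C=\sum_i\lambda_i\mathbf z_i\mathbf z_i^\top$ and set $s=an^{1/4}$, so every $\lambda_i\ge -s$. For a threshold $\theta=\theta(\tau)>0$, to be chosen later, take
\[
B=\sum_{\lambda_i\ge\theta n}\lambda_i\mathbf z_i\mathbf z_i^\top .
\]
Then $B\succeq0$.

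The rank and norm bounds are immediate. We have $\normF{C}\le\normF{A}+\normF{R}\le n+n^{2/3}$. Hence $\sum_i\lambda_i^2\le 4n^2$ and $\normop{B}\le\normop{C}\le n+n^{2/3}\le2n$. Moreover the number of eigenvalues with $\lambda_i\ge\theta n$ is at most $4\theta^{-2}$, so any $\theta\ge\kappa$ gives $\rank B\le4\kappa^{-2}$.

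It remains to bound
\[
\normF{C-B}^2=\sum_{\lambda_i<\theta n}\lambda_i^2 .
\]
The negative eigenvalues contribute at most $ns^2=a^2n^{3/2}=o(n^2)$. So everything reduces to the "middle" eigenvalues in $[0,\theta n)$. Let $W$ be the span of their eigenvectors and $E=P_WCP_W$. Then $0\preceq E\preceq\theta nI$ on $W$, and the goal is $\normF{E}^2\le\tau n^2/2$.

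The trace gives only a weak bound. Because $C+sI\succeq0$ and $C_{ii}\le2$, we get $\sum_i|\lambda_i|\le(2+3s)n$, hence $\normF{E}^2\le\theta n\cdot O(sn)$. This is of order $n^{9/4}$, which is useless. So we must exploit the $0$-$1$ structure of $A$, namely $A\circ A=A$, and this is where I expect the main difficulty.

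My plan is a cubic identity combined with Schur product positivity, in the spirit of the compression recursion of~\cite[Lemma~6.3]{JMTZ}. From $A\circ A=A$ we get
\[
\ip{C\circ C}{C}=\ip{C}{C}+\text{error},
\]
where the error is bounded using $\normF{R}\le n^{2/3}$, $R_{ii}\le2$ and $|R_{ij}|\le\sqrt{R_{ii}R_{jj}}\le2$. Substitute $C=B+E+N$, with $N$ the negative part, $\normop{N}\le s$. Cross terms involving $B$ are controlled by $\rank B=O_\theta(1)$ and $\normop{B}\le2n$. The term $\ip{E\circ E}{E}$ is at most $\theta n\,\tr(E\circ E)=\theta n\sum_iE_{ii}^2$, which is small because $E\preceq C_++N$ forces small diagonal entries on average. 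Matching the quadratic side, $\normF{E}^2$ appears with coefficient $1$, against a cubic side of size $O(\theta)\normF{E}^2$ plus error terms in $s$ and $n^{2/3}$.

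This yields an inequality of the form
\[
\normF{E}^2\le O(\theta)\normF{E}^2+O_\theta\bigl(sn^{7/4}+n^{2-1/12}\bigr)+(\text{mixed terms}).
\]
The mixed terms $\ip{B\circ B}{E}$ and $\ip{B\circ E}{E}$ are the real obstacle: they need not be small for a single threshold. For these I would iterate over a geometric sequence of thresholds $\theta_0>\theta_1>\dots>\theta_k\ge\kappa$, with $k=O(1/\tau)$. By pigeonhole, some band $[\theta_{j+1}n,\theta_jn)$ carries less than $\tau n^2/4$ of $\sum\lambda_i^2$, and we cut there. Then the mixed terms are bounded by the gap between consecutive thresholds, using $\normop{B\circ E}\le\max_iB_{ii}\normop{E}$ for $B\succeq0$.

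The constants $a=\tau/1000$ and $\kappa=(\tau/1024)^4$ are chosen so that this recursion closes with $\normF{C-B}^2\le\tau n^2$. The leftover $n$-dependent errors, of sizes $a^2n^{3/2}$ and $n^{2-1/12}$, fall below $\tau n^2/4$ once $n\ge N^*(\tau)$.
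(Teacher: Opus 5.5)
Your reduction is sound and is the paper's. Cutting the spectrum at $\theta n$ with $\theta\ge\kappa$ (the paper takes $\theta=\kappa$) gives $B\succeq0$, $\normop{B}\le2n$ and $\rank B\le4\kappa^{-2}$, and the negative eigenvalues cost only $ns^2$. The gap is the one nontrivial step, showing $\sum_{0\le\lambda_i<\theta n}\lambda_i^2=O(\tau n^2)$. Your inequality $\normF{E}^2\le O(\theta)\normF{E}^2+\cdots$ is asserted rather than derived, and the estimates you propose for its terms lose powers of $s=an^{1/4}$:
\begin{itemize}
\item \emph{The cubic term.} From $E\preceq C_+$ you only get $E_{ii}\le2+s$ and $\tr E\le3ns$. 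So $\theta n\sum_iE_{ii}^2$ is only $O(\theta a^2n^{5/2})$, far above $n^2$. Alternatively, $\ip{E\circ E}{E}\le\max_{i,j}|E_{ij}|\,\normF{E}^2$, but that coefficient may be as large as $2+s$, not $O(\theta)$.
\item \emph{The cross terms.} The same factor defeats the claim that these are controlled by $\rank B$ and $\normop{B}$. A priori $B_{ii}\le2+s$, and then e.g.\ $\ip{B\circ B}{E}\le\normop{B\circ B}\tr E\le(2+s)\cdot2n\cdot3ns$.
\item \emph{The pure $B$ term.} Substitute $C=B+E-C_-$ into the single scalar identity $\sum_{i,j}C_{ij}^3=\sum_{i,j}C_{ij}^2+O(n^{5/3})$. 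The term $\ip{B\circ B}{B}-\normF{B}^2=\sum_{i,j}(B_{ij}^3-B_{ij}^2)$ has no sign, since the entries of $B$ need not lie in $[0,1]$. Nothing in your sketch bounds it, so it can absorb all of $\normF{E}^2$.
\item \emph{The pigeonhole.} Pigeonholing over bands only makes one band thin. It says nothing about the mass below the cut, which is the entire content of the proposition.
\end{itemize}

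What is missing is the multiscale compression recursion of Lemmas~\ref{lem:compression} and~\ref{lem:tail}. Let $S(t)$ be the sum of the eigenvalues that are at least $t$, and $X$ the corresponding spectral part. Then $0\preceq X\preceq C+sI$, so $(C+sI)^{\circ q}\succeq X^{\circ q}$ for $q\in\{2,3\}$. One takes traces only over the span $W_t$ of $q$-fold Hadamard products of these top eigenvectors. This span contains the range of $X^{\circ q}$ and has dimension at most $(S(t)/t)^q$. Combine this with three facts:
\begin{itemize}
\item $A^{\circ q}=A$;
\item the $R$-error is at most $L_q\normF{R}\sqrt{\dim W_t}$;
\item $\tr_{W_t}C\le S(K)+K\dim W_t$, where $K=t^q/(16n)^{q-1}$.
\end{itemize}
For $t\ge16sn^{1-1/q}$ and $tS(t)\ge n^{9/5}$, this gives $S\bigl(t^q/(16n)^{q-1}\bigr)\ge S(t)^q/(4n^{q-1})$. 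Iterating with doublings and at most one tripling moves the threshold $16n^{1-c}$ down until $c\in[\frac13,\frac12]$. There the trace bound $S\le3ns$ gives a contradiction. This yields $S(t)\le4n(16n/t)^{3/4}$ for $16n^{7/8}\le t\le2n$, and integrating gives $\sum_{16n^{7/8}\le\lambda_i<\kappa n}\lambda_i^2\le128\kappa^{1/4}n^2$. The eigenvalues below $16n^{7/8}$ are handled by the trace bound and two applications of the $q=2$ case. Restricting to the low-dimensional subspace $W_t$ is exactly what avoids the factors of $s$ that sink your global estimates. The $\kappa^{1/4}$ is also where the fourth power in $\kappa=(\tau/1024)^4$ comes from; your sketch has no mechanism producing it.
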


We will also use the following form of~\cite[Lemma~8.2]{JMTZ}.
In their proof, the approximation error may be taken as $\frac{\delta^2n^2}{100}$, the exceptional set $V_0$ accounts for the rounding of block sizes, and the constant $K_{r,\delta}$ is chosen larger, if necessary, to bound both the number of parts and the required order threshold.
The required approximation error in Lemma~\ref{lem:partition} depends on $\delta$ but not on $r$, while the dimension affects only the constant $K_{r,\delta}$. This order of dependence allows the parameters below to be chosen successively.

\begin{lemma}\label{lem:partition}
Let $r\ge1$ be an integer and $0<\delta<\frac{1}{16}$. There is an integer
$K_{r,\delta}$ such that if $n\ge K_{r,\delta}$ and $A(G)$ has an approximation
$B\succeq0$ satisfying $\rank B\le r$, $\normop{B}\le2n$ and $\normF{A(G)-B}^2\le\frac{\delta^2}{100}n^2$, 
then there is a partition
$V(G)=V_0\cup V_1\cup\cdots\cup V_K$ such that
$|V_0|\le\delta n$, $|V_1|=\cdots=|V_K|=m$, $\frac1\delta\le K\le K_{r,\delta}$ and $m\ge\frac{n}{K_{r,\delta}}$.
The set $V_0$ may be empty. 
Among the unordered pairs $\{V_i,V_j\}$ with $1\le i<j\le K$, at most $\delta K^2$ are $\delta$-exceptional.
\end{lemma}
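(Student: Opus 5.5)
The plan is to reconstruct the argument of~\cite[Lemma~8.2]{JMTZ}. We realize $B$ as a Gram matrix, cluster the vertices by their Gram vectors into a bounded number of cells, cut the cells into equal blocks, and count exceptional pairs using the Frobenius error.

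\emph{Gram vectors and bad vertices.} Since $B\succeq0$ and $\rank B\le r$, write $B_{uv}=\ip{\mathbf z_u}{\mathbf z_v}$ with $\mathbf z_u\in\R^r$. Then
\[
\sum_u\norm{\mathbf z_u}^2=\tr B\le r\normop{B}\le 2rn .
\]
Fix a constant $L=L(r,\delta)$ with $2r/L\le\delta/2$. At most $\frac{\delta}{2}n$ vertices have $\norm{\mathbf z_u}^2>L$; these go into $V_0$.

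\emph{Cells.} The remaining vectors lie in the ball of radius $\sqrt L$. Partition this ball into at most $N=N(r,\delta)$ cells of diameter at most $\eta$. Choose $\eta$ so that, for $u,u'$ in one cell and $v,v'$ in another, $|B_{uv}-B_{u'v'}|\le 2\sqrt L\,\eta=:\eta'$ with $\eta'^2\le\delta^2/1000$.

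\emph{Blocks.} Set $m=\lfloor \delta n/(2N)\rfloor$, or smaller if necessary so that $K\ge 1/\delta$; note $m\ge n/K_{r,\delta}$ once $K_{r,\delta}$ is large in terms of $N$ and $\delta$. Chop each cell into blocks of size exactly $m$ and put the leftovers (fewer than $m$ per cell, so at most $Nm\le\frac{\delta}{2}n$ in total) into $V_0$. Then $|V_0|\le\delta n$. The number of blocks satisfies $Km\ge(1-\delta)n$, so $K\ge 1/\delta$ when $m\le\delta(1-\delta)n$, and $K\le n/m\le K_{r,\delta}$. Taking $n\ge K_{r,\delta}$ large makes all the floors harmless.

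\emph{Counting exceptional pairs.} This is the main step. Let $\{V_i,V_j\}$ be $\delta$-exceptional with density $d\in(\delta,1-\delta)$. Fix $b=B_{u_0v_0}$ for some $u_0\in V_i$, $v_0\in V_j$, so that $|B_{uv}-b|\le\eta'$ on the whole block pair. Since $A$ is $0$-$1$,
\[
\sum_{u\in V_i,v\in V_j}(A_{uv}-b)^2=m^2\bigl(d(1-b)^2+(1-d)b^2\bigr)\ge m^2 d(1-d)\ge \tfrac12\delta m^2 .
\]
Using $(x-y)^2\ge \tfrac12x^2-y^2$, we get
\[
\sum_{u\in V_i,v\in V_j}(A_{uv}-B_{uv})^2\ge m^2\Bigl(\tfrac14\delta-\eta'^2\Bigr)\ge \tfrac15\delta m^2 .
\]
Summing over exceptional pairs (each appearing twice in the Frobenius norm) and comparing with $\normF{A-B}^2\le\delta^2n^2/100$ bounds their number by
\[
\frac{\delta n^2}{40m^2}\le\frac{\delta K^2}{40(1-\delta)^2}\le\delta K^2 .
\]
The only delicate points are the order of constants ($L$, then $\eta$, then $N$, then $m$ and $K_{r,\delta}$, all depending only on $r,\delta$) and making the required error $\delta^2/100$ independent of $r$. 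The latter holds because the lower bound $\tfrac15\delta m^2$ per exceptional pair involves no $r$.
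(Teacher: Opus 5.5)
Your proof is correct and is essentially the argument the paper has in mind. The paper gives no proof of its own: it invokes \cite[Lemma~8.2]{JMTZ}, noting only that the Frobenius error can be taken as $\frac{\delta^2}{100}n^2$, that $V_0$ absorbs the rounding of block sizes, and that $K_{r,\delta}$ is enlarged as needed. Your reconstruction supplies exactly these details, with the constants chosen in a valid order: Gram vectors with the trace bound to discard high-norm vertices, a bounded family of small-diameter cells, chopping into blocks of equal size $m$, and the $r$-independent lower bound $\frac{\delta}{5}m^2$ on the Frobenius error of each exceptional block pair.
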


We are now ready to show the main result of this subsection. 

\begin{thm}\label{thm:structure}
For every $\eta\in(0,1)$, there is an integer $N_\eta\ge2$, 
such that for every $n$-vertex graph $G$ with $n\ge N_\eta$, if there exists a real
symmetric matrix $R$ such that $R\succeq0$, $R_{ii}\le2$ for $i=1,\dots,n$, $\normF{R}\le n^{2/3}$ and $\lambda_n(A(G)+R)\ge-\frac{\eta^4}{4\cdot10^9}n^{1/4}$, then $\ed_c(G)\le \eta n^2$.
\end{thm}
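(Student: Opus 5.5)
The plan is to chain Proposition~\ref{prop:lowrank}, Lemma~\ref{lem:partition}, Lemma~\ref{lem:obstruction} and Lemma~\ref{lem:editing}. Fix $\eta$ and choose $\delta<1/16$ small in terms of $\eta$ (something like $\delta=\eta^2/10^4$ should do). Set $\tau$ so that $a n^{1/4}=\frac{\tau}{1000}n^{1/4}$ matches the hypothesis: take $\tau=\eta^4/(4\cdot 10^6)$, which gives $a=\eta^4/(4\cdot10^9)$. We must also arrange $\tau$ small enough for the next step, roughly $\tau\le \delta^2/400$. If this forces $\tau$ smaller than $\eta^4/(4\cdot10^6)$, the hypothesis $\lambda_n\ge -a'n^{1/4}$ with the larger $a'$ only weakens, so we must in fact choose $\delta$ as a function of $\eta$ compatible with $\tau=\eta^4/(4\cdot 10^6)$. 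I would take $\delta=\eta^2/10^3$ and check the remaining constants close.

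Apply Proposition~\ref{prop:lowrank} with $A=A(G)$ and $C=A+R$ to get $B\succeq0$ of rank at most $4\kappa^{-2}$, with $\normop{B}\le2n$ and $\normF{C-B}^2\le\tau n^2$. Then
\[
\normF{A-B}^2\le\bigl(\sqrt\tau\, n+n^{2/3}\bigr)^2\le 2\tau n^2
\]
for large $n$, and this is at most $\frac{\delta^2}{100}n^2$. Lemma~\ref{lem:partition} with $r=4\kappa^{-2}$ now yields the partition $V_0,V_1,\dots,V_K$ with $m\ge n/K_{r,\delta}$.

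Build an auxiliary graph $H$ on $[K]$ with $ij$ an edge iff $(V_i,V_j)$ is $\delta$-full. Because $m\ge n/K_{r,\delta}$ exceeds $6s$ for $s=an^{1/4}$ and large $n$, and because $\lambda_n(G)\ge\lambda_n(A+R)-\ldots$ needs care, we cannot directly use $\lambda_n(G)$, since $R\succeq0$ raises eigenvalues. Instead I would rerun the test vector argument of Lemma~\ref{lem:obstruction} on $C=A+R$. The added term is $\mathbf x^\top R\mathbf x\le \normop{R}\cdot 6t\le 6t\,n^{2/3}$, which is $o(t^2)$ since $t\ge n/K_{r,\delta}$. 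Hence the $-t^2$ bound survives, and there are no $X,Y,Z$ among the $V_i$ forming a full–full–empty triple.

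Next, remove vertices $i$ of $[K]$ incident to more than $\sqrt\delta K$ exceptional pairs; there are at most $2\sqrt\delta K$ of these. On the rest, every non-exceptional pair is full or empty, and $H$ restricted to non-exceptional pairs has no induced ``full, full, empty'' cherry. This shows that $H$ minus the exceptional edges is close to a union of cliques. The cleanest route is to bound $p_3$ of $G$ itself by counting: an induced $P_3$ in $G$ has its three vertices in $V_0$, in an exceptional pair, in the same part, or in a triple of parts whose pair types are not full/full/empty, i.e.\ where some pair is mixed with the wrong type. Such triples contribute at most $\delta n^3$ each way. So $p_3(G)\le O(\sqrt\delta)\,n^3$, and Lemma~\ref{lem:editing} gives $\ed_c(G)\le 3\sqrt{n\cdot O(\sqrt\delta)n^3}=O(\delta^{1/4})n^2\le\eta n^2$ once $\delta\approx\eta^4/C$.

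Getting $\delta$ that small conflicts with the $\tau$ fixed by the eigenvalue hypothesis, since $\tau$ must be about $\delta^2\approx\eta^8$. This is the main obstacle. To avoid the $\sqrt{p_3}$ loss, I would instead edit directly. Within the non-exceptional structure, the "full" relation on parts is transitive by the obstruction, so parts group into clusters. Make each cluster's union a clique, delete the edges between clusters, and handle $V_0$, exceptional pairs and intra-part pairs by crude deletion. This costs $O(\delta)n^2+Km^2\le O(\delta)n^2$ using $K\ge1/\delta$. That gives $\ed_c(G)\le O(\delta)n^2$, so $\delta\approx\eta/C$ suffices and $\tau\approx\delta^2/100\approx\eta^2$, comfortably within $\eta^4/(4\cdot10^6)$. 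Transitivity only holds among triples with all pairs non-exceptional, so I would first delete the high-exceptional-degree parts as above and then argue that the full-graph on the remaining parts is a union of cliques up to $O(\sqrt\delta)K^2$ exceptional pairs. Lemma~\ref{lem:editing}, applied at the level of $H$, costs a square root there, so choose $\delta$ accordingly.
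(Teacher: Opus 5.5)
The skeleton you eventually settle on is exactly the paper's proof: Proposition~\ref{prop:lowrank}, then Lemma~\ref{lem:partition}, then Lemma~\ref{lem:obstruction} to exclude full--full--empty triples. You then form the auxiliary graph $H$ of $\delta$-full pairs. Every induced $P_3$ of $H$ has an exceptional endpoint pair, so $p_3(H)\le\delta K^3$, and Lemma~\ref{lem:editing} on $H$ costs $3\sqrt\delta K^2m^2\le3\sqrt\delta\,n^2$.

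As written, however, the proposal does not close. The one nontrivial point beyond chaining the lemmas is the calibration of $\delta$, and you get it wrong. Since $a=\tau/1000$ is fixed by the hypothesis, $\tau=\eta^4/(4\cdot10^6)$ is pinned. Lemma~\ref{lem:partition} needs $\normF{A-B}^2\le\frac{\delta^2}{100}n^2$, while you only know $\normF{A-B}^2\le(1+o(1))\tau n^2$. This bounds $\delta$ from \emph{below}, by about $10\sqrt\tau=\eta^2/200$. Your choice $\delta=\eta^2/10^3$ violates even your own condition $\tau\le\delta^2/400$, since it gives $\delta^2/400=\eta^4/(4\cdot10^8)<\tau$.

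Consequently the final edit cost must be $O(\sqrt\delta)\,n^2$ with a constant below roughly $10$. A $\delta^{1/4}$ loss is fatal, and an $O(\delta)$ bound is unnecessary. The paper takes $\delta=\eta^2/100$. Then $\tau=\delta^2/400$, and $2\tau n^2+2n^{4/3}\le\frac{\delta^2}{100}n^2$ for large $n$. The total cost is $(3\delta+3\sqrt\delta)n^2=(\frac{3\eta^2}{100}+\frac{3\eta}{10})n^2<\eta n^2$. With this $\delta$, your version also closes, including the extra $2\sqrt\delta\,n^2$ spent on pruning.

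Three intermediate claims should be corrected.

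\emph{The cluster route.} It fails for the reason you found yourself: fullness is transitive only when the third pair is non-exceptional. So ``$\delta\approx\eta/C$ suffices'' is unsupported.

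\emph{The pruning step.} Pruning parts with many exceptional pairs is superfluous. It does not improve $p_3(H)\le\delta K^3$, and it is the only source of the $\sqrt\delta$ in your count of $p_3(G)$. Without it, your ``cleanest route'' works. Count the 3-sets inducing $P_3$ by type:
\begin{enumerate}
\item those meeting $V_0$: at most $\frac\delta2n^3$;
\item those with two vertices in one part $V_i$, $i\ge1$: at most $\frac\delta2n^3$;
\item those using two parts that form an exceptional pair: at most $\delta n^3$;
\item those spread over three pairwise non-exceptional parts: at most $\frac\delta2n^3$.
\end{enumerate}
In the last case full--full--empty is excluded. So each induced $P_3$ contains a non-edge across a full pair or an edge across an empty pair, and there are at most $\frac\delta2n^2$ such pairs. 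Hence $p_3(G)\le\frac52\delta n^3$. Lemma~\ref{lem:editing} applied to $G$ itself then gives $\ed_c(G)\le3\sqrt{5/2}\,\sqrt\delta\,n^2<\frac\eta2n^2$ at $\delta=\eta^2/100$. This is a slightly shorter alternative to the paper's two-stage editing through $G'$ and $Q$.

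\emph{Using $\lambda_n(G)$.} You can use $\lambda_n(G)$ directly. Lemma~\ref{weyl} gives $\lambda_n(A(G))\ge-an^{1/4}-n^{2/3}$, and Lemma~\ref{lem:obstruction} with $s=2n^{2/3}$ applies because $m$ is linear in $n$. Your rerun of the test vector on $C$ is the same estimate.
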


\begin{proof}
Let $\delta=\frac{\eta^2}{100}$, $\xi=\frac{\delta^2}{100}$ and $\tau=\frac{\xi}{4}$.
Then $0<\delta<\frac{1}{16}$ and $\tau\in(0,1)$. Let $\kappa=(\frac{\tau}{1024})^4$, $r=\lceil4\kappa^{-2}\rceil$ and 
$a=\frac{\tau}{1000}$.
Let $N^*(\tau)$ be an integer from Proposition~\ref{prop:lowrank}, and let $K_{r,\delta}$ be an integer from Lemma~\ref{lem:partition}. Let
\begin{equation}\label{eq:structure-threshold}
 N_\eta=\max\left\{
  N^*(\tau),\  (13K_{r,\delta})^3,\ \left\lceil\frac{8}{\xi^{3/2}}\right\rceil \right\}.
\end{equation}

Let $R$ be a matrix as in the statement of the theorem and $C=A(G)+R$. Then $\lambda_n(C)\ge -an^{1/4}$.
Proposition~\ref{prop:lowrank} gives a matrix $B\succeq0$ of rank at
most $r$, with $\normop{B}\le2n$ and $\normF{C-B}^2\le\tau n^2$. 
Then
\[
 \normF{A(G)-B}^2\le2\normF{C-B}^2+2\normF{R}^2\le2\tau n^2+2n^{4/3}\le\xi n^2=\frac{\delta^2}{100}n^2.
\]
Thus, all hypotheses of Lemma~\ref{lem:partition} hold. Let $V(G)=V_0\cup V_1\cup \dots \cup V_K$ be a partition with $|V_i|=m$ for $i=1,\dots,K$, where $m\ge \frac{n}{K_{r,\delta}}$ and $\frac{1}{\delta}\le K\le K_{r,\delta}$.

Let $s=2n^{2/3}$.
By Lemma~\ref{weyl}, 
\[
 \lambda_n(G)\ge\lambda_n(C)-\normop{R}
 \ge-a n^{1/4}-n^{2/3}\ge-2n^{2/3}=-s.
\]
By~\eqref{eq:structure-threshold}, $m\ge \frac{n}{K_{r,\delta}}\ge 13n^{2/3}>6s$.
Thus, Lemma~\ref{lem:obstruction} shows that $G$ does not contain three vertex subsets whose successive pairs are $\delta$-full and whose remaining pair is $\delta$-empty.

Let $Q$ be the graph on vertex set $\{v_1,\dots,v_K\}$ in which $v_iv_j\in E(Q)$ if and only if $(V_i,V_j)$ is $\delta$-full.
Suppose that $v_iv_jv_k$ is an induced $P_3$ of $Q$. Then as $v_iv_k\notin E(Q)$, $(V_i,V_k)$ is not $\delta$-full. Also, $(V_i,V_k)$ is not $\delta$-empty.  It must be $\delta$-exceptional. 
Each $\delta$-exceptional unordered pair can be the pair of endpoints of at most $K-2$ induced $P_3$. 
Lemma~\ref{lem:partition} consequently gives
$p_3(Q)\le\delta K^2\cdot K=\delta K^3$.
By Lemma~\ref{lem:editing}, $\ed_{c}(Q)\le 3\sqrt{\delta}K^2$. 

We obtain a new graph $G'$ from $G$ through the following operations. First, delete all edges incident to $V_0$. Next, add all missing edges inside each $V_i$ for $i=1,\dots,K$. Then, delete all edges between all $\delta$-empty pairs, and add all missing edges between $\delta$-full pairs. Finally, delete all edges between all $\delta$-exceptional pairs.
We now estimate the number of edges added or deleted in each step. 
The first step changes at most $|V_0|n\le\delta n^2$ edges. 
The second changes at most $K\binom m2\le\frac{(Km)^2}{2K}\le\frac{n^2}{2K}\le\frac{\delta}{2}n^2$.
The third changes at most $\delta\binom K2m^2\le\frac{\delta n^2}{2}$.
The final step changes at most $\delta K^2m^2\le\delta n^2$.
Thus, the total number of edges added or deleted is at most $3\delta n^2$.
Now, $G'[V_0]$ is independent, and $G'[V_i]$ is a clique for $i=1,\dots,K$. For $\{i,j\}\subseteq  \{1,\dots,K\}$,  $V_i$ and $V_j$ are completely connected in $G'$ precisely when their corresponding vertices in $Q$ are adjacent.

To conclude the construction, we obtain a new graph from $G'$ by performing the operations corresponding to the transformation of $Q$ into a union of cliques. 
Note that whenever an edge is added to or deleted from $Q$, we change all $m^2$ vertex pairs between the corresponding parts. Recall that $\ed_{c}(Q)\le 3\sqrt{\delta}K^2$.
The number of edges added or deleted is at most $3\sqrt{\delta}K^2 m^2\le 3\sqrt\delta n^2$. 
Together with the steps before, the total number of edges added or deleted is at most 
\[
 (3\delta+3\sqrt\delta)n^2
 =\left(\frac{3\eta^2}{100}+\frac{3\eta}{10}\right)n^2
 <\eta n^2.
\]
This shows that $\ed_{c}(G)\le \eta n^2$.
\end{proof}

Taking $R=0$ gives a structural statement for graphs with small least eigenvalue. For general $R$, one cannot discard the perturbation at the start: the bound $\normop{R}\le n^{2/3}$ is larger than the $n^{1/4}$ scale.

\subsection{Balanced clique partitions}

For a graph $G$, let $\ed_{bc}(G)$ be the minimum number of edge additions and deletions needed to turn $G$ into a balanced union of cliques. 

\begin{lemma}\label{lem:balance}
Let $G$ be a graph of order $n$ with average degree $k$.
Suppose that a union of cliques can be obtained from $G$ by $t$ edge additions or deletions. 
Let $\mathcal E=2t+\sum_{v\in V(G)}|d_G(v)-k|$.
If $\mathcal E<\frac{(k+1)n}{4}$, then $\ed_{bc}(G)\le  t+\frac{4\mathcal E n}{k+1}+\frac{n^2}{k+1}$.
\end{lemma}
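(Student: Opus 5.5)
Let $H$ be a union of cliques with $|E(G)\triangle E(H)|=t$, and let $C_1,\dots,C_q$ be its cliques, of sizes $c_1,\dots,c_q$. The plan is to modify $H$ into a balanced union of cliques whose parts all have size $\lfloor n/q'\rfloor$ or $\lceil n/q'\rceil$. The number of parts $q'$ is chosen near $n/(k+1)$, because a balanced union of cliques with average degree about $k$ has parts of size about $k+1$. The total cost is then $t$ plus the cost of rebalancing $H$. So the task is to show that rebalancing costs at most $\frac{4\mathcal E n}{k+1}+\frac{n^2}{k+1}$.

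\textbf{Step 1: clique sizes are concentrated.} In $H$ every vertex of $C_j$ has degree $c_j-1$. By \eqref{eq:degree-edit},
\[
\sum_{v}|d_H(v)-k|\le \sum_v|d_G(v)-k|+2t=\mathcal E,
\]
which says that $\sum_j c_j\,|c_j-1-k|\le\mathcal E$. Call a clique \emph{good} if $|c_j-(k+1)|\le (k+1)/2$, and \emph{bad} otherwise. Vertices in bad cliques each contribute more than $(k+1)/2$ to the sum above, so fewer than $2\mathcal E/(k+1)<n/2$ vertices lie in bad cliques. This is where the hypothesis $\mathcal E<\frac{(k+1)n}{4}$ is used: it guarantees that good cliques exist and cover at least $n/2$ vertices.

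\textbf{Step 2: rebalance.} Fix a target size $s$ close to $k+1$, for instance $s=\lceil n/q'\rceil$ for a suitable $q'$. Process the cliques in three ways.
\begin{itemize}
\item Shrink each clique of size larger than $s$ to size $s$ by detaching the surplus vertices. A vertex detached from $C_j$ costs at most $c_j-1$ deleted edges.
\item Gather all detached vertices, together with the vertices of bad small cliques, into new cliques of size $s$, or use them to fill up undersized cliques. A vertex joining a clique costs at most $s$ added edges.
\item Fix the last few leftover vertices and adjust the sizes to differ by at most one, at a cost of about $O(n)\cdot O(k+1)$ or less.
\end{itemize}
A vertex moved out of a clique of size $c_j$ costs $O(c_j)$. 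The number of vertices that must move from clique $C_j$ is about $|c_j-s|$. So the main cost is roughly
\[
\sum_j |c_j-s|\,(c_j+s),
\]
and this sum must be bounded by $O(\mathcal E\, n/(k+1))+n^2/(k+1)$.

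\textbf{Step 3: the main obstacle.} The difficulty is converting $\sum_j c_j|c_j-(k+1)|\le\mathcal E$ into a bound on the edit cost for huge cliques, whose cost grows like $c_j^2$. For a clique with $c_j\ge 2(k+1)$, split it into pieces of size $s$ rather than detaching vertices one at a time. Splitting costs at most $c_j^2/2$. Since $c_j|c_j-(k+1)|\ge c_j^2/2$ for such cliques, this cost is at most the clique's contribution to $\mathcal E$, which is within the claimed bound because $n/(k+1)\ge1$.

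For cliques with size in $(\tfrac{k+1}{2},2(k+1))$, the cost of moving $|c_j-s|$ vertices is $O((k+1)|c_j-s|)$. Summing over these cliques gives $O(\mathcal E)$, plus an $|s-(k+1)|\cdot n$ term coming from the gap between $s$ and $k+1$. That gap is at most about $n/(k+1)$ per vertex times a constant, which produces the $n^2/(k+1)$ term.

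Tiny cliques, of size at most $(k+1)/2$, are dissolved and their vertices regrouped. Each such vertex costs $O(k+1)$ and there are at most $2\mathcal E/(k+1)$ of them, so the total is $O(\mathcal E)$.

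The factor $4\mathcal E n/(k+1)$ is generous and absorbs these constants, since $n/(k+1)\ge1$. I expect that tuning $s$ and $q'$ so that the sizes come out exactly balanced will need care. The cleanest choice is to take $q'$ to be the number of cliques after splitting and dissolving, and then to move at most $n/q'$ vertices per part to equalize sizes.
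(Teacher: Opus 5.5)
Your Step 1 matches how the paper begins. Steps 2--3, however, have a genuine gap: you never fix the number of parts in a way tied to $\mathcal E$, and the justification you give for the resulting ``gap'' term is false.

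If $q'$ is merely an integer near $n/(k+1)$, then $n/q'-(k+1)=\bigl(n-q'(k+1)\bigr)/q'$ can be of order $(k+1)^2/n$. Your term $|s-(k+1)|\cdot n$ can then be of order $(k+1)^2$, which no constant multiple of $n^2/(k+1)$ bounds once $k+1\gg n^{2/3}$. In that situation the cost has to be charged to $\mathcal E$: when $n/(k+1)$ is far from an integer, $H$ cannot have almost all vertices in cliques of order close to $k+1$. No inequality in your proposal does this.

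Your ``cleanest choice'' is also circular. The target $s=\lceil n/q'\rceil$ depends on $q'$, while $q'$ is defined as the number of cliques after splitting into pieces of size $s$.

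Finally, the lemma asserts the explicit coefficients $4$ and $1$. The coefficient $1$ on $n^2/(k+1)$ leaves no room for ``a constant''. When $k+1$ is comparable to $n$, the term $4\mathcal En/(k+1)$ is only about $4\mathcal E$, so the untracked constants in your separate detaching and attaching charges are not automatically absorbed.

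The missing estimate is the paper's. Let $r$ be the number of cliques of $H$ of order $a_i\ge\frac{k+1}{2}$, and let $s_0$ be the number of vertices in the other cliques. Step 1 gives $s_0\le\frac{2\mathcal E}{k+1}$ and $\sum_{i\le r}|a_i-(k+1)|\le\frac{2\mathcal E}{k+1}$. From $\sum_{i\le r}a_i=n-s_0$ you then get $|r(k+1)-n|\le\frac{4\mathcal E}{k+1}$, and hence $\sum_{i\le r}|a_i-\frac nr|\le\frac{6\mathcal E}{k+1}$.

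Now keep exactly these $r$ cliques, huge ones included, as the parts, with balanced targets $b_i$. Then $\sum_i|a_i-b_i|\le\frac{6\mathcal E}{k+1}+r$. The number of reassigned vertices is $s_0+\sum_i(a_i-b_i)_+=\frac12\bigl(s_0+\sum_i|a_i-b_i|\bigr)\le\frac{4\mathcal E}{k+1}+\frac r2$. Charging each reassigned vertex at most $n$ changed pairs and using $r\le\frac{2n}{k+1}$ gives exactly $\frac{4\mathcal En}{k+1}+\frac{n^2}{k+1}$.

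In this accounting your ``main obstacle'' disappears and no splitting is needed. The number of vertices leaving a huge clique is controlled by $\frac{k+1}{2}\sum_{i\le r}|a_i-(k+1)|\le\mathcal E$, and each costs at most $n$, which is precisely what the factor $n/(k+1)$ in the bound pays for. The $n^2/(k+1)$ term comes only from rounding $n/r$ to integers.
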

\begin{proof}
Let $H$ be a union of cliques on $V(G)$ obtainable with $t$ edge additions or deletions, 
and let $a_1,\ldots,a_\ell$ be the orders of its components. Assume that $a_1,\dots,a_r\ge \frac{k+1}{2}$, and if $r<\ell$,  $a_{r+1},\dots,a_\ell<\frac{k+1}{2}$. 
For every vertex $v\in V(G)$, we have $|d_H(v)-k|=|d_H(v)-d_G(v)|+|d_G(v)-k|$ and so~\eqref{eq:degree-edit} gives \[
\sum_{v\in V(G)}|d_H(v)-k|\le2t+\sum_{v\in V(G)}|d_G(v)-k|\le \mathcal E.
\]
Let $W=\sum_{i=1}^\ell a_i|a_i-(k+1)|$. Then $W=\sum_{v\in V(G)}|d_H(v)-k|\le \mathcal E$.

For every vertex $v$ in a component of order smaller than $\frac{k+1}{2}$, it contributes more than $\frac{k+1}{2}$ to $W$. The number of such vertices is equal to $s=\sum_{i=r+1}^\ell a_i$. Then $s\le \frac{2\mathcal{E}}{k+1}$. As $\mathcal E<\frac{(k+1)n}{4}$, $s<n$. This implies $r\ge 1$.  
As $a_i\ge \frac{k+1}{2}$ for $i\le r$, we have $r\frac{k+1}{2}\le \sum_{i=1}^ra_i\le \sum_{i=1}^\ell a_i=n$ and so $r\le \frac{2n}{k+1}$. Moreover, \[
\frac{k+1}{2}\sum_{i=1}^r |a_i-(k+1)|\le \sum_{i=1}^ra_i|a_i-(k+1)|\le \sum_{i=1}^\ell a_i|a_i-(k+1)|\le \mathcal{E}.
\]
Thus $\sum_{i=1}^r |a_i-(k+1)|\le \frac{2\mathcal{E}}{k+1}$.
As $\sum_{i=1}^r a_i=n-s$, we have $|r(k+1)-n|\le\sum_{i=1}^r|(k+1)-a_i|+s\le\frac{4\mathcal E}{k+1}$.
It follows that
\[
 \sum_{i=1}^r\left|a_i-\frac nr\right|
 \le\sum_{i=1}^r|a_i-(k+1)|+r\left|(k+1)-\frac nr\right|
 \le\frac{6\mathcal E}{k+1}.
\]
Write $n=qr+r_0$, where $q=\lfloor \frac{n}{r}\rfloor$ and $0\le r_0<r$. Let $b_1,\ldots,b_r$ be such that $b_1=\cdots=b_{r_0}=q+1$ and
$b_{r_0+1}=\cdots=b_r=q$. Then \[
 \sum_{i=1}^r|a_i-b_i|\le \sum_{i=1}^r|a_i-\frac{n}{r}|+r\le \frac{6\mathcal E}{k+1}+r. 
\]

Let $H^*$ be a balanced union of cliques with exactly $r$ components. We obtain $H^*$ from $H$ by retaining $\min\{a_i,b_i\}$ vertices in each of the first $r$ components of $H$, and then assigning the remaining vertices among these $r$ components so that the $i$-th component has order $b_i$, then completing each component
and deleting all edges between distinct components. So the number of assigned vertices is \[
s+\sum_{i=1}^r(a_i-b_i)_+=s+\frac{1}{2}\sum_{i=1}^r(|a_i-b_i|+(a_i-b_i))=\frac{1}{2}\left(s+\sum_{i=1}^r|a_i-b_i|\right)\le \frac{4\mathcal E}{k+1}+\frac{r}{2}\le \frac{4\mathcal E}{k+1}+\frac{n}{k+1}. 
\]
From $H$ to $H^*$, the edges added or deleted are only those incident with assigned vertices. Hence the number of such edges is at most $\left(s+\sum_{i=1}^r(a_i-b_i)_+\right)n\le\frac{4\mathcal E n}{k+1}+\frac{n^2}{k+1}$. Thus, $\ed_{bc}(G)\le t+\frac{4\mathcal E n}{k+1}+\frac{n^2}{k+1}$.
\end{proof}

\section{From small discrepancy to a Tur\'an graph}\label{sec:main}

Throughout this section, let $G$ be a graph of order $n\ge2$, and let $p=p(G)$, $d=d(G)$, $D=\disc^+(G)$, and $M=A(G)-p(J-I)$.

\subsection{Two results on positive discrepancy}

\begin{lemma}\label{lem:degrees}
For a graph $G$ of order $n\ge 2$,  
$\sum_{v\in V(G)}|d_G(v)-d|\le2n\sqrt{2D}$.
\end{lemma}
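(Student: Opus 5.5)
The plan is to relate the degree deviations to the discrepancy of a random set that contains all high-degree vertices. Since the deviations $d_G(v)-d$ sum to zero, it suffices to bound the total positive excess. Let $S=\{v: d_G(v)>d\}$, $s=|S|$, and
\[
x=\sum_{v\in S}\bigl(d_G(v)-d\bigr)=\tfrac12\sum_{v\in V(G)}|d_G(v)-d|.
\]
The goal is then $x\le n\sqrt{2D}$.

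For $q\in[0,1]$, let $T$ be a random set containing $S$ together with each vertex of $V\setminus S$ independently with probability $q$. Put $f(q)=\mathbb E\,\disc_G(T)$. Since $\disc_G(T)\le D$ always, we have $f(q)\le D$ for every $q$. A direct expansion gives
\[
f(q)=e(S)+q\,e(S,S^c)+q^2e(S^c)-p\Bigl[\tbinom s2+sq(n-s)+q^2\tbinom{n-s}2\Bigr].
\]
This is an exact quadratic polynomial in $q$, so the analysis reduces to its value and first two derivatives at $q=1$.

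First, $f(1)=\disc_G(V)=0$.

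Next,
\[
f'(1)=e(S,S^c)+2e(S^c)-p\bigl[s(n-s)+(n-s)(n-s-1)\bigr]=\sum_{v\notin S}d_G(v)-p(n-s)(n-1).
\]
Since $d=p(n-1)$, this equals $\sum_{v\notin S}(d_G(v)-d)=-x$.

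Finally, $f''=2e(S^c)-2p\binom{n-s}2$, so $|f''|\le 2\binom n2\le n^2$.

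Because $f$ is exactly quadratic, for $h\in[0,1]$ we get
\[
D\ge f(1-h)\ge xh-\tfrac{n^2}{2}h^2.
\]
Now take $h=x/n^2$. This is admissible because $x\le\sum_{v\in S}d_G(v)\le n(n-1)<n^2$. The choice gives $D\ge x^2/(2n^2)$, so $x\le n\sqrt{2D}$. Hence $\sum_v|d_G(v)-d|=2x\le 2n\sqrt{2D}$.

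The only delicate point is computing $f'(1)$ correctly, in particular checking that the $p$-terms combine into $p(n-s)(n-1)=(n-s)d$. Everything else is routine bookkeeping.
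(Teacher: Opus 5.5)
Your proof is correct and is essentially the paper's argument: the paper uses the same random set (vertices with $d_G(v)\ge d$ always kept, each vertex of $Z=\{v:d_G(v)<d\}$ kept independently with probability $1-t$, where $t$ equals your $h=x/n^2$), and it writes your quadratic $f(1-h)$ as $\tfrac12\mathbf{x}^\top M\mathbf{x}$ with $\mathbf{x}=\mathbf 1-t\mathbf 1_Z$, bounding $t^2\mathbf 1_Z^\top M\mathbf 1_Z\ge -t^2n^2$ just as you bound $f''$. The only difference is cosmetic: your randomized set also contains the vertices of degree exactly $d$, which is harmless because your computation of $f'(1)=-x$ holds regardless.
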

\begin{proof}
Let $\mathbf{z}\in \R^n$ with $z_v=d_G(v)-d$ for $v\in V(G)$.
Then, as $p(n-1)=d$, $\mathbf{z}=M\mathbf{1}$. Let $Z=\{v\in V(G):z_v<0\}$ and let $s=-\sum_{v\in Z}z_v$. 
We have
$\sum_{v\in V(G)}z_v=\mathbf{1}^\top M\mathbf{1}=0$, and so $\sum_{v\in V(G)}|z_v|=2s$. 

If $s=0$, then $\mathbf{z}$ is a zero vector, proving the lemma. Suppose in the following that $s>0$. Let $t=\frac{s}{n^2}$. As $|z_v|\le n$, $0<t\le 1$. Let $\mathbf{x}=\mathbf{1}-t\mathbf{1}_Z$. Then $0\le x_v\le 1$ for $v\in V(G)$. Let $U\subseteq V(G)$ be the random subset in which each vertex $v$ is included independently with probability $x_v$.  Note that $\frac{1}{2}\mathbf{x}^\top M\mathbf{x}=\mathbb E\disc_G(U)\le D$, where $\mathbb E$ denotes expectation with respect to these independent choices.
On the other hand, \[
\mathbf{x}^\top M\mathbf{x}=-2t\mathbf{1}_Z^\top M\mathbf{1}+t^2\mathbf{1}_Z^\top M\mathbf{1}_Z=2ts+t^2\mathbf{1}_Z^\top M\mathbf{1}_Z\ge 2ts-t^2n^2=\frac{s^2}{n^2},
\]
where the inequality holds as $|\mathbf{1}_Z^\top M\mathbf{1}_Z|\le |Z|^2\le n^2$. So $s\le n\sqrt{2D}$.
Thus, $$\sum_{v\in V(G)}|d_G(v)-d|=\sum_{v\in V(G)}|z_v|=2s\le 2n\sqrt{2D},$$ as required.
\end{proof}

The second consequence is a matrix inequality. We derive it from Lemmas~\ref{lem:rounding} and~\ref{lem:positivepart}.
The minimization  problem  below uses the same quadratic semidefinite duality construction as in~\cite[Propositions~1 and~2]{KMR}. We give a direct proof to establish the bound $0\le R_{ii}\le2$ and the complementary-slackness relation $y_i(2-R_{ii})=0$ needed in our application.

\begin{lemma}\label{lem:certificate}
There exist a nonnegative diagonal matrix $Y=\diag(y_1,\ldots,y_n)$ and a matrix $R\succeq0$ such that $Y-M+R\succeq0$,  $R_{ii}\le2$ for any $i=1,\dots,n$ and $\sum_{i=1}^n y_i+\frac14\normF{R}^2\le4\pi D$.
\end{lemma}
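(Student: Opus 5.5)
The plan is to obtain $Y$ and $R$ from an explicit convex minimization over the diagonal, and to bound its value by combining Lemmas~\ref{lem:rounding} and~\ref{lem:positivepart}. This avoids appealing to a general strong duality theorem.

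\emph{Step 1: the rounding bound $\beta\le 8D$.} The matrix $M=A(G)-p(J-I)$ has zero diagonal and $|M_{ij}|\le1$, so Lemma~\ref{lem:rounding} applies to it. Let $\beta=\max_{\sigma}\sigma^\top M\sigma$, and write a sign vector as $\sigma=\mathbf 1_U-\mathbf 1_{U^c}$. Note that $\mathbf 1_U^\top M\mathbf 1_U=2\disc_G(U)$ and that $\mathbf 1^\top M\mathbf 1=0$, since $p$ is the edge density. These give $2\,\mathbf 1_U^\top M\mathbf 1_{U^c}=-2\disc_G(U)-2\disc_G(U^c)$. Hence
\[
\sigma^\top M\sigma=4\bigl(\disc_G(U)+\disc_G(U^c)\bigr)\le 8D .
\]
Therefore Lemma~\ref{lem:rounding} yields $\ip{M}{X}-\normF{X}^2\le 4\pi D$ for every $X\succeq0$ with $X_{ii}\le1$.

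\emph{Step 2: the minimization.} For $\mathbf y\in\R^n_{\ge0}$ define
\[
f(\mathbf y)=\sum_i y_i+g(M-\diag(\mathbf y))=\sum_i y_i+\tfrac14\normF{(M-\diag(\mathbf y))_+}^2 .
\]
This is the Lagrangian dual of the problem in Step 1, by Lemma~\ref{lem:positivepart}. The function $f$ is continuous and satisfies $f(\mathbf y)\ge\sum_i y_i$, so it attains its minimum on the closed orthant at some $\mathbf y^*$. Put $Y=\diag(\mathbf y^*)$ and $R=(M-Y)_+\succeq0$. Then $Y-M+R=(M-Y)_-\succeq0$.

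\emph{Step 3: first-order conditions.} By Lemma~\ref{lem:positivepart}, $g$ is $C^1$ with $\nabla g=\frac12(\cdot)_+$. Hence $\partial f/\partial y_i=1-\frac12\ip{R}{\mathbf e_i\mathbf e_i^\top}=1-\frac12R_{ii}$. Optimality of $\mathbf y^*$ over the orthant gives $1-\frac12R_{ii}\ge0$, i.e.\ $R_{ii}\le2$, together with complementary slackness $y_i^*(2-R_{ii})=0$.

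\emph{Step 4: bounding the value.} Set $X=\frac12R$. Then $X\succeq0$, $X_{ii}\le1$, and $X$ is the maximizer for $g(M-Y)$ in Lemma~\ref{lem:positivepart}. Using complementary slackness,
\[
f(\mathbf y^*)=\sum_i y^*_i+\ip{M-Y}{X}-\normF{X}^2=\ip{M}{X}-\normF{X}^2+\sum_i y_i^*\Bigl(1-\tfrac12R_{ii}\Bigr)=\ip{M}{X}-\normF{X}^2 .
\]
By Step 1 this is at most $4\pi D$. Since $f(\mathbf y^*)=\sum_i y_i^*+\frac14\normF{R}^2$, the required bound follows.

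The main point needing care is Step 3. Differentiability of $g$ must hold even at matrices with zero eigenvalues, which Lemma~\ref{lem:positivepart} guarantees. One must also use the correct one-sided optimality conditions at the boundary $y_i=0$. The remaining steps are direct computations.
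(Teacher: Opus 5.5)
Your proposal is correct and follows the paper's proof step for step: the same bound $\beta\le 8D$, the same minimization of $\sum_i y_i+g(M-\diag(\mathbf y))$ over the orthant, and the same first-order and complementary-slackness conditions. The evaluation at $X=\frac12R$ also matches; the only cosmetic difference is that you invoke the maximizer identity of Lemma~\ref{lem:positivepart}, where the paper uses $\ip{(M-Y)_+}{(M-Y)_-}=0$, and these are equivalent.
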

\begin{proof}
Let $\beta=\max_{\sigma\in\{-1,1\}^n}\sigma^\top M\sigma$. For a sign vector $\sigma$, let $U$ be the set of vertices $v\in V(G)$ with $\sigma(v)=1$, and $U^c=V(G)\setminus U$. 
Write $\mathbf{x}=\mathbf 1_U$ and $\mathbf{z}=\mathbf 1_{U^c}$, so that $\sigma=\mathbf{x}-\mathbf{z}$.
Then
\[
0=(\mathbf{x}+\mathbf{z})^\top M(\mathbf{x}+\mathbf{z})
   =\mathbf{x}^\top M\mathbf{x}+2\mathbf{x}^\top M\mathbf{z}+\mathbf{z}^\top M\mathbf{z}.
\]
For the sign vector $\mathbf{x}-\mathbf{z}$ this gives \[
 (\mathbf{x}-\mathbf{z})^\top M(\mathbf{x}-\mathbf{z})=2\mathbf{x}^\top M\mathbf{x}+2\mathbf{z}^\top M\mathbf{z}=4\bigl(\disc_G(U)+\disc_G(U^c)\bigr)\le8D.
\]
Thus $\beta\le8D$.

Let $g$ be the function from Lemma~\ref{lem:positivepart}. For
$\mathbf{y}=(y_1,\ldots,y_n)$ with all $y_i\ge 0$, define
\[
\Phi(\mathbf{y})=\sum_{i=1}^n y_i+g\bigl(M-\operatorname{diag}(\mathbf{y})\bigr).
\]
By Lemma~\ref{lem:positivepart}, $g$ is continuous. Moreover, $\Phi(\mathbf{y})\ge\sum_i y_i\ge0$. The set $\{\mathbf{y}\in[0,\infty)^n:\Phi(\mathbf{y})\le\Phi(\mathbf{0})\}$ is nonempty, closed and contained in the compact cube $[0,\Phi(\mathbf{0})]^n$.
Hence $\Phi$ attains a minimum on $[0,\infty)^n$.
Fix a minimizing vector $\mathbf{y}$ of $\Phi$ and let $Y=\diag(\mathbf{y})$ and $R=(M-Y)_+$. Clearly, $R\succeq 0$. 
By Lemma~\ref{lem:positivepart}, for each $i$, we have
\[
 \frac{\partial\Phi}{\partial y_i}=1-\frac12R_{ii}.
\]
Note that $R_{ii}$ depends on $\mathbf{y}$. As $\mathbf{y}$ minimizes $\Phi$,  $\frac{\partial\Phi}{\partial y_i}=0$ if $y_i>0$ and $\frac{\partial\Phi}{\partial y_i}\ge 0$ if $y_i=0$. 
Consequently, $R_{ii}\le2$.
Moreover, $Y-M+R=(M-Y)_-\succeq0$.

Recall that $\mathbf{y}$ minimizes $\Phi$, $y_i(1-\frac{1}{2}R_{ii})=0$ for $i=1,\dots,n$. Then $y_iR_{ii}=2y_i$, and so $\ip{Y}{R}=\sum_{i=1}^ny_iR_{ii}=2\sum_{i=1}^ny_i$. 
Since $M-Y=R-(M-Y)_{-}$ and the positive and negative parts are orthogonal in the Frobenius inner product, $\langle M-Y,R\rangle=\|R\|_F^2$. Hence
\[
 \ip{M}{R}=\ip{Y}{R}+\normF{R}^2.
\]
Let $X=\frac{1}{2}R$. Then $X\succeq 0$ and $X_{ii}\le 1$ for $i=1,\dots,n$. By Lemma~\ref{lem:rounding}, we have \[
 \sum_{i=1}^n y_i+\frac14\normF{R}^2=\frac12\ip{Y}{R}+\frac14\normF{R}^2=\ip{M}{X}-\normF{X}^2 \le\frac\pi2\beta\le4\pi D,
\]
proving the lemma.
\end{proof}

In the proof of this lemma, $R$ is kept general. In particular, we do not claim the stronger statement that would be obtained by setting $R=0$.
This is why Theorem~\ref{thm:structure}, stated in the previous section, is formulated so as to accommodate a general matrix $R$.

\subsection{Proofs of the two main results}

Now, we are ready to prove Theorem~\ref{thm:discrepancy}.

\begin{proof}[Proof of Theorem~\ref{thm:discrepancy}]
Suppose that $G$ is $\eps$-far from every Tur\'an graph. 
Then $e(G)=\ed(G,T_1(n))\ge \eps n^2$ and $\binom n2-e(G)=\ed(G,T_n(n))\ge \eps n^2$. So $0<e(G)<\binom n2$ and $0<2\eps n^2\le\binom n2$, which implies $0<\eps<\frac{1}{4}$.
Let
\[
 \rho=\frac{\eps^2}{100},  \   \eta=\frac\rho4,  \   a_\eta=\frac{\eta^4}{4\cdot 10^9},  \  
 c_0=\frac{\eta a_\eta}{16\pi}.
\]
Let $N_\eta$ be an integer from Theorem~\ref{thm:structure}, and
\begin{equation}\label{eq:main-threshold}
 \begin{split}
 N_\eps=\left\lceil\max\left\{
 2N_\eta,\ \rho^{-1},\
 \left(\frac{4}{a_\eta}\right)^4,\
 \bigl(4\cdot2^{2/3}\sqrt{\pi c_0}\bigr)^{24},\
 \left(\frac{2\sqrt{2c_0}}{\rho}\right)^{8/3}
 \right\}\right\rceil.
 \end{split}
\end{equation}
Let $c=\min\{c_0, N_\eps^{-13/4}\}$.

First suppose $2\le n<N_\eps$.  Let $S$ be a set of two adjacent vertices of $G$. Then $D\ge \disc_G(S)=1-p$. Since $e(G)<\binom n2$, we have $1-p\ge \frac{1}{\binom n2}\ge \frac{2}{n^2}$. Then $c n^{5/4}\le n^{-2}\le D$. Hence $D\ge c n^{5/4}$, as desired.

Suppose in the following that $n\ge N_\eps$. 
Suppose to the contrary that $D<c_0n^{5/4}$. 

Let $A=A(\overline G)$ and choose $Y,R$ as in Lemma~\ref{lem:certificate}. Note that $M=A(G)-p(J-I)=J-I-A-p(J-I)=(1-p)(J-I)-A$. Then by Lemma~\ref{lem:certificate}, $A+R+Y+(1-p)I-(1-p)J=Y-M+R\succeq 0$. As $p\le 1$ and $J\succeq 0$, $(1-p)J\succeq 0$. Then 
\begin{equation}\label{eqq1}
 A+R+Y+(1-p)I\succeq0.
\end{equation}
Let $t_0=\frac{a_\eta n^{1/4}}{4}$ and $W=\{v:y_v>t_0\}$.
The bound $\sum_vy_v\le4\pi D$ yields
\[
 |W|\le\frac{\sum_vy_v}{t_0}
 \le\frac{16\pi D}{a_\eta n^{1/4}}
 <\frac{16\pi c_0}{a_\eta}n=\eta n.
\]
Let $U=V(G)\setminus W$ and $m=|U|$. Then
$m\ge(1-\eta)n\ge n/2\ge N_\eta$ and
$0\preceq Y[U]\preceq t_0I$. By Lemma~\ref{lem:certificate}, $R\succeq 0$ and $R_{vv}\le 2$ for $v\in U$.
Moreover, Lemma~\ref{lem:certificate} and~\eqref{eq:main-threshold} give
\[
 \normF{R[U]}\le\normF{R}\le4\sqrt{\pi D}
 \le4\sqrt{\pi c_0}\,n^{5/8}
 \le2^{-2/3}n^{2/3}\le m^{2/3}.
\]
By~\eqref{eqq1}, we have $(A+R+Y+(1-p)I)[U]\succeq 0$ and so
\[
 \lambda_m(A[U]+R[U])\ge-(t_0+1-p)\ge-(t_0+1)\ge -\frac{a_\eta}{2}n^{1/4}\ge -a_\eta |U|^{1/4}.
\]
By Theorem~\ref{thm:structure}, $\ed_c(\overline{G}[U])\le \eta |U|^2$.  
A union of cliques can be obtained from $\overline{G}$ by isolating all vertices of $W$ and adding or deleting at most $\eta |U|^2$ edges of $\overline{G}[U]$. Thus, \[
\ed_c(\overline{G})\le |W|n+\ed_c(\overline{G}[U])\le \eta n^2+\eta |U|^2\le 2\eta n^2\le \frac{\rho}{2}n^2.
\]

Let $k=d(\overline{G})$ and $\mathcal{E}=2\ed_c(\overline{G})+\sum_{v\in V(G)}|d_{\overline{G}}(v)-k|$.
As $n\ge N_\eps\ge \bigl(2\sqrt{2c_0}/\rho\bigr)^{8/3}$, we have $\sqrt{2c_0}\le (\frac{\rho n}{2})^{3/8}$.
For $v\in V(G)$, as $d_{\overline{G}}(v)-k=(n-1-d_G(v))-(n-1-d)=-(d_G(v)-d)$,
we have by Lemma~\ref{lem:degrees} that \[
\sum_{v\in V(G)}|d_{\overline{G}}(v)-k|=\sum_{v\in V(G)}|d_G(v)-d|\le 2n\sqrt{2D}\le 2\sqrt{2c_0}n^{13/8}\le \rho n^2.
\]
Recall that $\binom n2-e(G)\ge \eps n^2$. We have $e(\overline{G})\ge \eps n^2$ and so $k\ge 2\eps n$. Together with $\rho=\frac{\eps^2}{100}$, it then follows $k\ge 8\rho$. 
Thus,
\[
\mathcal{E}\le \rho n^2+\rho n^2<\frac{\eps}{2}n^2<\frac{(k+1)}{4}n.
\]
So by Lemma~\ref{lem:balance}, \[
\ed_{bc}(\overline{G})\le\ed_{c}(\overline{G})+\frac{4\mathcal{E} n}{k+1}+\frac{n^2}{k+1}\le \frac{\rho}{2}n^2+\frac{4\rho n^2}{\eps}+\frac{n}{2\eps}\le \frac{5\rho}{\eps}n^2=\frac{\eps}{20}n^2<\eps n^2.
\]
Note that the complement of the resulting balanced union of cliques is $T_\ell(n)$ for some integer $\ell$. This implies $\ed(G,T_\ell(n))<\eps n^2$, contradicting the fact that $G$ is $\eps$-far from every Tur\'an graph. Therefore, $D\ge c_0n^{5/4}\ge c n^{5/4}$. 
\end{proof}

\begin{proof}[Proof of Corollary~\ref{cor:eigenvalue}]
As $G$ is $d$-regular, $\overline G$ is $(n-1-d)$-regular. If $\overline{G}$ is a union of cliques, then every component of $\overline{G}$ has order $n-d$, and so $G$ is a Tur\'an\ graph, contrary to the fact that $G$ is $\eps$-far from every Tur\'an graph. Hence $\overline{G}$ is not a union of cliques.
By Lemma~\ref{lem:regular} and Theorem~\ref{thm:discrepancy},
\[
 c n^{5/4}\le\disc^{+}(G)
 \le\frac{2+\sqrt2}{8}\,n\lambda_2(G).
\]
Thus, $\lambda_2(G)\ge c'n^{1/4}$ with $c'=\frac{8c}{2+\sqrt2}=(8-4\sqrt2)c$.
\end{proof}

%
%

\appendix
\section{Appendix: Proof of Proposition~\ref{prop:lowrank}}\label{app:spectrum}

Throughout this appendix, $A$ is a symmetric $0$-$1$ matrix with zero
diagonal and $C=A+R$, where $R\succeq0$, $R_{ii}\le2$ for $i=1,\dots,n$, $\normF{R}\le n^{2/3}$ and $\lambda_n(C)\ge-a n^{1/4}$. 
We have $0\le \tr C=\tr R\le 2n$ and $\normF{A}^2=\sum_{i,j}A_{ij}^2\le n^2$. So by triangle inequality, $\normop{C}\le \normF{C}\le \normF{A}+\normF{R}\le n+n^{2/3}\le 2n$. 

Write the eigenvalues of $C$ as $\theta_1\ge\cdots\ge\theta_n\ge-s$, where $1\le s\le n^{1/4}$, and let $\mathbf{z}_1,\dots,\mathbf{z}_n$ be the corresponding orthonormal eigenvectors.
Then
\begin{equation}\label{eq:basic-spectral}
 \begin{gathered}
 \sum_{\theta_i>0}\theta_i
 =\tr C+\sum_{\theta_i<0}|\theta_i|\le2n+ns\le3ns.
 \end{gathered}
\end{equation}
 For $t>0$, let
\[
 I(t)=\{i\in[n]:\theta_i\ge t\},  \  
 S(t)=\sum_{i\in I(t)}\theta_i.
\]
Thus $|I(t)|\le\frac{S(t)}{t}$. 

Let $N_*=104^{30}$. For $n\ge N_*$, we have
$5n^{-2/15}\le\frac{1}{4}$, $26n^{-1/30}\le\frac{1}{4}$ and $16n^{7/8}\le2n$.

\begin{lemma}\label{lem:compression}
Let $n\ge N_*$, $q\in\{2,3\}$ and $t>0$ satisfying $t\ge16s\,n^{1-1/q}$. If $tS(t)\ge n^{9/5}$, then $S\left(\frac{t^q}{(16n)^{q-1}}\right)\ge\frac{S(t)^q}{4n^{q-1}}$.
\end{lemma}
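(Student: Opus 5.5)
The plan is to run the compression argument of \cite{Zhang,JMTZ}, based on Hadamard powers. Since $A$ is a $0$-$1$ matrix with zero diagonal, $A^{\circ q}=A$. Hence $C^{\circ q}$ differs from $C$ only through terms containing at least one factor of $R$. Because $R\succeq0$ with $R_{ii}\le2$, every entry satisfies $|R_{ij}|\le2$, and every entry of $C$ is bounded by $3$. Expanding $(A+R)^{\circ q}-A-R$ entrywise, each term is a Hadamard product with at least one factor $R$ and all other factors bounded. This should give
\[
\normF{C^{\circ q}-C}\le c_q\normF{R}\le c_qn^{2/3}
\]
for an absolute constant $c_q$. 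This is the only place where $\normF{R}\le n^{2/3}$ enters. The hypothesis $tS(t)\ge n^{9/5}$ is designed so that errors of size $n^{2/3}$ times the operator norms appearing below are negligible.

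Next, split $C=B+E$, where $B=\sum_{i\in I(t)}\theta_i\mathbf z_i\mathbf z_i^\top\succeq0$ and $E$ has all eigenvalues in $[-s,t)$. By the Schur product theorem, $B^{\circ q}\succeq0$. Moreover,
\[
\tr B^{\circ q}=\sum_jB_{jj}^q\ge n\Bigl(\frac{\tr B}{n}\Bigr)^q=\frac{S(t)^q}{n^{q-1}}
\]
by the power-mean inequality. To transfer this mass to large eigenvalues of $C$, I will use $X=B^{\circ q}$ as a test matrix. For any $X\succeq0$ and any threshold $\tau$,
\[
\sum_{\theta_i\ge\tau}\theta_i\ \ge\ \frac{\ip{C}{X}-\tau\tr X}{\normop{X}}.
\]
Here $\normop{B^{\circ q}}\le(\max_jB_{jj})^{q-1}\normop{B}\le(2n)^{q-1}\cdot2n$, using $\normop{C}\le2n$ and $B_{jj}\le C_{jj}+s\le 2n$. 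The target threshold is $\tau=t^q/(16n)^{q-1}$. The factor $16$ absorbs these operator-norm bounds, and the loss of $4$ absorbs the error terms.

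The main step, and the expected obstacle, is a lower bound for $\ip{C}{B^{\circ q}}$ of order roughly $\ip{B}{B^{\circ q}}$. First I replace $C$ by $C^{\circ q}$; the error is at most $\normF{C^{\circ q}-C}\,\normF{B^{\circ q}}$, which is controlled by the first step. Then I expand
\[
C^{\circ q}=(B+E)^{\circ q}=B^{\circ q}+\sum_{k=1}^{q}\binom qk B^{\circ(q-k)}\circ E^{\circ k}.
\]
I split $E=E_+-E_-$. All terms built only from $B$ and $E_+$ are PSD, so their inner products with $B^{\circ q}$ are nonnegative and can be dropped. The terms containing $E_-$ have $\normop{E_-}\le s$. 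Using $\normop{P\circ Q}\le\max_j P_{jj}\,\normop{Q}$ for $P\succeq0$, each such term is bounded by $s\cdot(2n)^{q-1}\tr B^{\circ q}$ up to constants. Mixed terms with both signs of $E$ need care, since they are not PSD; I would try bounding them by Cauchy--Schwarz against the PSD terms. The condition $t\ge16sn^{1-1/q}$ is exactly what should make all $E_-$ contributions at most a quarter of the main term $\ip{B^{\circ q}}{B^{\circ q}}\ge(\tr B^{\circ q})^2/n$.

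Finally, I combine $\ip{B^{\circ q}}{B^{\circ q}}\ge(\tr B^{\circ q})^2/n$ with $\tr B^{\circ q}\ge S(t)^q/n^{q-1}$ and the operator-norm bound. I expect the resulting ratio to be at least $S(t)^q/(4n^{q-1})$, while the $\tau\tr X$ term is only a fraction of the main term by the choice of $\tau$. The numerical conditions $5n^{-2/15}\le\frac14$ and $26n^{-1/30}\le\frac14$, valid for $n\ge N_*$, should be exactly what is needed to absorb the $R$-error and the cross terms when $q=2$ and $q=3$ respectively.
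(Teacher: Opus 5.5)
Your proposal has a genuine gap in the step that transfers the mass of $\tr B^{\circ q}$ to $S(\tau)$ using $X=B^{\circ q}$ as the test matrix. The loss there is structural, not a matter of constants. Your inequality $S(\tau)\ge(\ip{C}{X}-\tau\tr X)/\normop{X}$ is valid. But your main term is $\ip{C}{X}\approx\normF{X}^2\le\normop{X}\tr X$, so you can recover at most the fraction $\normF{X}^2/(\normop{X}\tr X)$ of $\tr X$. That fraction is small whenever the spectrum of $B^{\circ q}$ is spread out.

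For example, take $B=t\sum_{l\le k}\mathbf z_l\mathbf z_l^\top$ with all entries of each $\mathbf z_l$ equal to $\pm n^{-1/2}$ and the products $\mathbf z_l\circ\mathbf z_m$ ($l<m$) mutually orthogonal. Then $B^{\circ 2}$ has one eigenvalue $t^2k/n$ and $\binom k2$ eigenvalues $2t^2/n$, so the fraction is about $3/k$.

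With the bounds you actually propose, the outcome is much worse. From $\normF{X}^2\ge(\tr X)^2/n$, $\normop{X}\le 2^qn^q$ and $\tr X\ge S(t)^q/n^{q-1}$ you only get $S(\tau)\gtrsim S(t)^{2q}/(2^qn^{3q-1})$. This reaches $S(t)^q/(4n^{q-1})$ only if $S(t)\gtrsim n^2$, whereas $S(t)\le\sum_{\theta_i>0}\theta_i\le 3ns\le 3n^{5/4}$.

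Sharper inputs do not rescue it. Using $B_{jj}\le C_{jj}+s\le 3s$ and $\normF{X}^2\ge(\tr X)^2/\rank X$ with $\rank X\le(S(t)/t)^q$, the recovered fraction is only $t^q/(2\cdot 3^{q-1}s^{q-1}n^q)$. This is tiny near $t=16sn^{1-1/q}$, exactly where the lemma is needed. Separately, you leave the mixed $E_+$/$E_-$ terms of your expansion unresolved.

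The missing idea is to test against the orthogonal projection onto a subspace containing the range of $B^{\circ q}$, not against $B^{\circ q}$ itself. Set $W=\spanof\{\mathbf z_{i_1}\circ\cdots\circ\mathbf z_{i_q}: i_1,\dots,i_q\in I(t)\}$, so $d=\dim W\le|I(t)|^q\le(S(t)/t)^q$. Since $\normop{P_W}=1$, with $K=t^q/(16n)^{q-1}$ you get
$\tr_W C=\sum_i\theta_i\norm{P_W\mathbf z_i}^2\le S(K)+Kd$.
Here $Kd\le 16^{1-q}S(t)^q/n^{q-1}$, and there is no operator-norm loss.

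For the lower bound, do not split $E=E_+-E_-$ at all. Since $0\preceq B\preceq C+sI$, monotonicity of Hadamard powers on PSD matrices (a consequence of the Schur product theorem) gives $(C+sI)^{\circ q}\succeq B^{\circ q}$. Hence $\tr_W\bigl((C+sI)^{\circ q}\bigr)\ge\tr B^{\circ q}\ge S(t)^q/n^{q-1}$.

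The difference $(C+sI)^{\circ q}-C$ then splits into two pieces:
\begin{itemize}
\item a diagonal matrix with entries in $[0,3^qs^q]$, costing at most $3^qs^qd\le(3/16)^qS(t)^q/n^{q-1}$ by $t\ge16sn^{1-1/q}$;
\item a zero-diagonal matrix of Frobenius norm at most $L_qn^{2/3}$ (your first step), costing at most $L_qn^{2/3}\sqrt d\le\frac14 S(t)^q/n^{q-1}$ by $tS(t)\ge n^{9/5}$.
\end{itemize}
This is the paper's proof. Your $\normF{C^{\circ q}-C}\lesssim\normF{R}$ estimate (for the off-diagonal part) and your power-mean bound on $\tr B^{\circ q}$ fit into it unchanged.
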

\begin{proof}
Let $X=\sum_{i\in I(t)}\theta_i \mathbf{z}_i \mathbf{z}_i^\top$.
Then $0\preceq X\preceq C+sI$. By~\cite[Theorem~4.1]{JMTZ}, $(C+sI)^{\circ q}\succeq X^{\circ q}$. 
Let $W=\spanof\{\mathbf{z}_{i_1}\circ\cdots\circ \mathbf{z}_{i_q}: i_1,\ldots,i_q\in I(t)\}$ and $d=\dim W$.
Then $d\le|I(t)|^q\le\bigl(\frac{S(t)}{t}\bigr)^q$.
By the identity $(\mathbf{x}\mathbf{x}^\top)\circ(\mathbf{y}\mathbf{y}^\top)=(\mathbf{x}\circ \mathbf{y})(\mathbf{x}\circ \mathbf{y})^\top$, each column of $X^{\circ q}$ is a Hadamard product of $q$ columns of $X$. Hence the range of $X^{\circ q}$ is contained in $W$. 
So H\"older's inequality gives
\[
\tr_W\bigl((C+sI)^{\circ q}\bigr)\ge\tr X^{\circ q}=\sum_{i=1}^n X_{ii}^q\ge\frac{(\tr X)^q}{n^{q-1}}=\frac{S(t)^q}{n^{q-1}}.
\]

Decompose $(C+sI)^{\circ q}$ as $(C+sI)^{\circ q}=C+D+E$, where $D$ is the diagonal part of $(C+sI)^{\circ q}-C$ and $E$ is its zero-diagonal part. 
As $0\le C_{ii}\le2$ and $s\ge1$, we have $0\preceq D\preceq3^qs^qI$ and so $\tr_W D\le3^qs^q d$.
For $i\ne j$, let $f_q(t)=t^q-t$. Then $f_q(A_{ij})=0$ and $E_{ij}=f_q(C_{ij})-f_q(A_{ij})$.
Let $L_q=\sup_{x\in[-2,3]}|f_q'(x)|$. Then $L_2=5$ and $L_3=26$. Hence, by the mean value theorem, $\normF{E}\le L_q\normF{R}\le L_q n^{2/3}$.  
By Cauchy--Schwarz inequality, $|\tr_W E|\le \normF{P_W}\normF{E}\le  L_q n^{2/3}\sqrt d$.

Let $K=\frac{t^q}{(16n)^{q-1}}$. 
The proof of~\cite[Lemma~6.2]{JMTZ}, which applies directly to any real symmetric matrix, yields $\tr_W C\le S(K)+Kd$. Thus,
\[
 S(K)\ge\tr_W C-Kd\ge \tr_W\bigl((C+sI)^{\circ q}\bigr)-\tr_W D-\tr_W E-Kd\ge \frac{S(t)^q}{n^{q-1}}
       -(K+3^qs^q)d-L_q n^{2/3}\sqrt d.
\]
As $S(t)>0$, \[
\frac{(K+3^qs^q)d+L_q n^{2/3}\sqrt d}{\frac{S(t)^q}{n^{q-1}}}
\le
\frac{1}{16^{q-1}}+\left(\frac{3}{16}\right)^q
+\frac{L_q n^{q-1+2/3}}{(tS(t))^{q/2}}\le \frac{1}{16}+\frac{9}{256}+\frac{1}{4}<\frac{3}{4}.
\]
Thus, $S(K)\ge \frac{S(t)^q}{4n^{q-1}}$.
\end{proof}

For $n\ge N_*$, the case $q=2$ also gives
\begin{equation}\label{eq:quadratic-alternative}
 S(t)\le\max\left\{
  2\sqrt{nS\left(\frac{t^2}{16n}\right)},
  \frac{n^{9/5}}{t}\right\}
   \  \text{for }t\ge16s\sqrt n.
\end{equation}

\begin{lemma}\label{lem:tail}
Let $n\ge N_*$ and $t>0$ satisfying $16n^{7/8}\le t\le2n$. Then $S(t)\le4n\left(\frac{16n}{t}\right)^{3/4}$.
\end{lemma}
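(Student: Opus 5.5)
The plan is to prove the bound by a self-similar downward iteration of the quadratic case of Lemma~\ref{lem:compression}, in the form \eqref{eq:quadratic-alternative}, and to handle the bottom of the iteration by a separate base case, possibly with help from the cubic case $q=3$. Throughout write $F(t)=4n\left(\frac{16n}{t}\right)^{3/4}$.

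\emph{Step 1: the recursion preserves the bound.} Put $K=\frac{t^2}{16n}$. Then $\frac{16n}{K}=\left(\frac{16n}{t}\right)^2$. So if $S(K)\le F(K)$, the first alternative in \eqref{eq:quadratic-alternative} gives
\[
S(t)\le 2\sqrt{n\cdot 4n\left(\tfrac{16n}{t}\right)^{3/2}}=F(t).
\]
The same algebra works for any exponent in place of $\frac34$, and it also works for the cubic recursion: $S(t)^3\le 4n^2\,F\bigl(\frac{t^3}{(16n)^2}\bigr)$ gives $S(t)\le 4^{2/3}n\left(\frac{16n}{t}\right)^{3/4}\le F(t)$. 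The second alternative is harmless. Indeed $\frac{n^{9/5}}{t}\le F(t)$ is equivalent to $n^{4/5}\le 4\,t^{1/4}(16n)^{3/4}$, which always holds. Finally, the hypothesis $t\ge 16s\sqrt n$ of \eqref{eq:quadratic-alternative} holds for all $t\ge16n^{7/8}$, because $s\le n^{1/4}$.

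\emph{Step 2: iterate and stop at a base case.} Starting from $t_0=t\in[16n^{7/8},2n]$, I set $t_{j+1}=\frac{t_j^2}{16n}$. Since $t_j<16n$, this sequence strictly decreases, in fact doubly exponentially in the exponent of $n$. Step~1 lets me pass from $t_{j+1}$ back to $t_j$ as long as $t_j\ge16s\sqrt n$. For the base I will use the two cheap bounds available. The first is $S(\cdot)\le 3ns$ from \eqref{eq:basic-spectral}; it yields $S(K)\le F(K)$ whenever $K\le 16n s^{-4/3}$, and $16ns^{-4/3}\ge16n^{2/3}$ because $s\le n^{1/4}$. The second is $\sum\theta_i^2\le\normF{C}^2\le4n^2$, which gives $S(K)\le 4n^2/K$ and hence $S(K)\le F(K)$ for $K\ge n/4096$; this takes care of the top of the range. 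The number of iterations is bounded, since the exponent of $n$ in $t_j/(16n)$ doubles at each step.

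\emph{Step 3: the main obstacle.} The difficulty is the window where the chain has gone below the region covered by Step~1 but not yet into the region covered by the base case, i.e. $t_j<16s\sqrt n$ while $t_j>16ns^{-4/3}$. Such a window exists only when $s$ is close to $n^{1/4}$. The quadratic step from $16n^{7/8}$ lands at $16n^{3/4}$, and one more step needs $t\ge16s\sqrt n$, which is borderline. Here I would first try to exploit that the iteration from any $t\ge16n^{7/8}$ needs only a bounded number of steps. At the last admissible $t_j$, I would avoid falling back on $S\le3ns$. Instead I would combine it with the alternative $\frac{n^{9/5}}{t}$ (i.e. use Lemma~\ref{lem:compression} in contrapositive: if $tS(t)<n^{9/5}$ we are done directly). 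Failing that, I would choose the branching at each step between $q=2$ and $q=3$ so that the final landing point lies below $16ns^{-4/3}$ while every intermediate point meets the corresponding condition $t\ge16sn^{1-1/q}$. Since the target exponent $\frac34$ is exactly the one at which both recursions are consistent with the trivial bound $3ns$ at $s=n^{1/4}$, I expect this bookkeeping of admissible ranges, rather than the algebra, to be the delicate part.
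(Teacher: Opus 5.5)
Your Step~1 is correct: the exponent $\frac34$ is preserved by both the quadratic and the cubic recursion. (The claim that $\frac{n^{9/5}}{t}\le 4n(\frac{16n}{t})^{3/4}$ ``always holds'' actually needs $t\ge 2^{-20}n^{1/5}$, but that is true wherever you use the recursion.) Step~3 also identifies the real difficulty. Write $t=16n^{1-c}$. A $q=2$ step sends $c\mapsto 2c$ and, since $s$ may be as large as $n^{1/4}$, is admissible only for $c\le\frac14$. A $q=3$ step sends $c\mapsto 3c$ and is admissible only for $c\le\frac1{12}$. The trivial bound $S\le 3ns$ closes the chain only once $c\ge\frac13$. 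Pure doubling can get stuck in $(\frac14,\frac13)$: for $a=0.07$ the chain is $0.07\to0.14\to0.28$.

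However, the proposal stops exactly there. ``Choose the branching between $q=2$ and $q=3$ so that \dots'' restates what must be proved. Showing that an admissible schedule exists for every $a\in(0,\frac18]$ is the only nontrivial content of the lemma beyond Step~1, so as written this is a genuine gap. Your first fallback does not close the window. At the last admissible point, feeding $S\le 3ns$ into the quadratic alternative only gives $S(t_j)\le 2\sqrt3\,n\sqrt s$. For $a=0.07$ and $s=n^{1/4}$ this is of order $n^{1.125}$, whereas you need $4n^{1.105}$. The $\frac{n^{9/5}}{t}$ branch has nothing to do with this failure.

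The missing argument, which is what the paper does, is short. Let $t=16n^{1-a}$ with $0<a\le\frac18$, and let $j$ be the least integer with $2^ja\ge\frac13$. Then $j\ge2$ and $2^ja<\frac23$.
\begin{itemize}
\item If $2^ja\le\frac12$, use $j$ doublings.
\item Otherwise $j\ge3$, since $4a\le\frac12$. Minimality of $j$ then gives $a<\frac{1}{3\cdot 2^{j-1}}\le\frac1{12}$. Apply one $q=3$ step at $a$, then $j-2$ doublings; this ends at $\frac34\,2^ja\in(\frac38,\frac12)$.
\end{itemize}
In either case the final parameter $c^*$ lies in $[\frac13,\frac12]$. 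Every doubling starts at some $c\le c^*/2\le\frac14$, and the single tripling starts at $a<\frac1{12}$, so every step is admissible. At $c^*$ you have $4n\cdot n^{3c^*/4}\ge 4n^{5/4}\ge 4ns\ge S$, so your base case holds. Running your Step~1 back up this chain (using your cubic computation for the tripling) completes the proof.

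The paper runs the same chain in contrapositive form. It assumes $S(16n^{1-c})>4n\cdot n^{3c/4}$, which also makes the hypothesis $xS(x)\ge n^{9/5}$ of Lemma~\ref{lem:compression} automatic. It then pushes this inequality through the chain until it contradicts $S\le 3ns$ at $c^*$.
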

\begin{proof}
Let $a$ be such that $0<a\le\frac18$ and $t=16n^{1-a}$, and let
$F(x)=\frac{S(x)}{4n}$ for $x>0$.
It suffices to prove that $F(t)\le n^{3a/4}$.

Suppose to the contrary that $F(t)>n^{3a/4}$.
We will carry this inequality forward along a sequence of parameter transformations until the parameter lies in $[\frac13,\frac12]$, where it contradicts the known upper bound. To this end, we first choose a sequence of transformations that takes the parameter $a$ to a number in $[\frac13,\frac12]$.

Let $j$ be the least nonnegative integer such that $2^ja\ge\frac13$.
Then $j\ge2$ and $\frac13\le2^ja<\frac23$.
If $2^ja\le\frac12$, then let $b=2^j$. 
If $2^ja>\frac12$, then $j\ge3$ and the minimality of $j$ gives
$a<\frac1{3\cdot2^{j-1}}\le\frac1{12}$. 
In this case,  let $b=3\cdot2^{j-2}$. 
The final parameter is then
$$ba=\frac34\,2^ja\in\left(\frac38,\frac12\right).$$
Thus, in either case, the final parameter $ba$ lies in
$[\frac13,\frac12]$.
For $q=2$, the starting value is at most $\frac{ba}{2}\le\frac14$.
For $q=3$, the starting value is $a<\frac1{12}$.
Thus the required bounds on $a$ hold in all cases.

Now we show by induction that $F(16n^{1-c})>n^{3c/4}$ for every $c$ in the chosen sequence.
The assertion holds at the start $c=a$ by our contradiction hypothesis.
Suppose it holds at some $c$ from which the next step is to be made, and let $x=16n^{1-c}$.
Since $c\le ba\le\frac12$, we have
$$xS(x)=64n^{2-c}F(x)>64n^{2-c/4}
      \ge64n^{15/8}>n^{9/5}.$$
By Lemma~\ref{lem:compression},
\[
F\left(16n^{1-qc}\right)
=\frac{S\left(16n^{1-qc}\right)}{4n}
=\frac{S\left(\frac{(16n^{1-c})^q}{(16n)^{q-1}}\right)}{4n}
\ge \frac{S(16n^{1-c})^q}{16n^q}
=4^{q-2}F(16n^{1-c})^q
\ge F(16n^{1-c})^q.
\]

At the end $ba$, however,~\eqref{eq:basic-spectral} gives $$n^{1/4}\le n^{3ba/4}<F(16n^{1-ba}) \le\frac{3s}{4}\le\frac34 n^{1/4},$$
a contradiction.
It follows that our contradiction hypothesis was false, and hence $F(t)\le n^{3a/4}$.
Therefore, $S(t)\le4n\,n^{3a/4}=4n\left(\frac{16n}{t}\right)^{3/4}$, as required.
\end{proof}

\begin{proof}[Proof of Proposition~\ref{prop:lowrank}]
Let $a=\frac{\tau}{1000}$ and $\kappa=(\frac{\tau}{1024})^4$.
Let
\[
 N^*(\tau)=\left\lceil\max\left\{
 N_*,\ a^{-20},\ \left(\frac{16}{\kappa}\right)^8,
 \left(\frac{256}{\tau}\right)^{16}
 \right\}\right\rceil.
\]
Assume $n\ge N^*(\tau)$ and let $s=an^{1/4}$.
Then $\theta_n\ge-s$ and $n^{1/5}\le s\le n^{1/4}$.
Let
$t_0=16s\sqrt n$, $t_1=16\sqrt s n^{3/4}$ and $L=16n^{7/8}$.
Then $t_0\le t_1\le L\le\kappa n<2n$.
\eqref{eq:basic-spectral} gives
\[
 \sum_{\theta_i<0}\theta_i^2
 +\sum_{0<\theta_i<t_0}\theta_i^2
 \le ns^2+3nst_0
 \le n^{3/2}+48s^2n^{3/2}.
\]
Note that $\frac{t_0^2}{16n}=16s^2$ and
$\frac{t_1^2}{16n}=t_0$. Applying~\eqref{eq:quadratic-alternative} at $t_0$ and $t_1$, respectively, gives
\begin{align*}
 S(t_0)&\le\max\left\{\sqrt{12}\,n\sqrt s,
                       \frac{n^{9/5}}{t_0}\right\}
          \le4n\sqrt s,\\
 S(t_1)&\le\max\left\{2\sqrt{nS(t_0)},
                       \frac{n^{9/5}}{t_1}\right\}
          \le4ns^{1/4}.
\end{align*}
As $s\ge n^{1/5}$, we have
\[
 \frac{n^{9/5}}{t_0}\le\frac{n\sqrt s}{16},  \  
 \frac{n^{9/5}}{t_1}\le\frac{n^{19/20}}{16}
                     \le\frac{ns^{1/4}}{16}.
\]
Consequently,
\begin{align*}
 \sum_{t_0\le\theta_i<t_1}\theta_i^2
 &\le t_1S(t_0)\le64s n^{7/4},\\
 \sum_{t_1\le\theta_i<L}\theta_i^2
 &\le LS(t_1)\le64s^{1/4}n^{15/8}\le64n^{31/16}.
\end{align*}
Finally, Lemma~\ref{lem:tail} gives
$S(t)\le32n^{7/4}t^{-3/4}$ for $L\le t\le\kappa n$.
Summing the identity
$\theta^2=L\theta+\int_L^\theta\theta\,dt$ over
$L\le\theta_i<\kappa n$ yields
\[
 \sum_{L\le\theta_i<\kappa n}\theta_i^2\le LS(L)+\int_L^{\kappa n}S(t)\,dt
\le32n^{7/4}L^{1/4}
       +128n^{7/4}\bigl((\kappa n)^{1/4}-L^{1/4}\bigr)\le128\kappa^{1/4}n^2.
\]
Note that $s=an^{1/4}$, $48a^2+64a+128\kappa^{1/4}
\le\frac{112\tau}{1000}+\frac{\tau}{8}<\frac{\tau}{4}$, $64n^{-1/16}\le\frac{\tau}{4}$ and $n^{-1/2}\le\frac{\tau}{4}$. 
Then
\[
 \sum_{\theta_i<\kappa n}\theta_i^2
 \le(48a^2+64a+128\kappa^{1/4})n^2
      +64n^{31/16}+n^{3/2}
 \le\tau n^2.
\]

Let $B=\sum_{\theta_i\ge\kappa n}\theta_i \mathbf{z}_i \mathbf{z}_i^\top$.
Then $B\succeq0$ and
\[
 \normF{C-B}^2=\sum_{\theta_i<\kappa n}\theta_i^2\le\tau n^2,
   \  
 \rank B\le\frac{\normF{C}^2}{(\kappa n)^2}\le4\kappa^{-2}.
\]
Also $\normop{B}\le\normop{C}\le2n$, completing the proof.
\end{proof}

\section*{Declaration on the use of AI}

During the preparation of this manuscript, one author used ChatGPT 5.6 as an auxiliary tool, primarily to check technical details, and to improve the language of the manuscript.

\end{document}